\newcommand \datum {June 23, 2024}
\numberwithin{equation}{section}
\theoremstyle{plain}
 \newtheorem{theorem}{Theorem}
 \newtheorem{lemma}{Lemma}
 \newtheorem{corollary}{Corollary}
\newtheorem{fact}{Fact}
\theoremstyle{definition}
 \newtheorem{definition}{Definition}
\theoremstyle{remark}
\newtheorem{remark}{Remark}
\newcommand\alg[1]{\mathbb#1}
\newcommand \dual[1]{{#1^{\textup{du}}}}
\newcommand \vdual[1]{{#1{}^{\textup{du}}}}
\newcommand \ddual[1]{\vdual{(\dual{#1})}}
\newcommand \tpbg {paired-bipolar-graphs}
\newcommand \problem{\textup{PBGP}}
\newcommand \scheme {\textup{PBGS}}
\newcommand \initcnt[2] {\textup{Cnt}_{\textup{init,}#2}[#1]}
\newcommand \termcnt[2] {\textup{Cnt}_{\textup{term,}#2}[#1]}
\newcommand \btranspcnt[2] {\textup{Cnt}_{\textup{transp,}#2}[#1]}
\newcommand \edgecnt[3] {\textup{EfEdge}[#1,#2,#3]}
\newcommand \esetcnt[3] {\textup{EfSet}[#1,#2,#3]}
\newcommand \Sub [1] {\textup{Sub}(#1)}
\newcommand \Var [1] {\textup{Vrb}(#1)}
\newcommand \SSC{\textup{SSC}}
\newcommand \vvec[1] {\vec#1\kern1.5pt'}
\newcommand \transp [1] {#1^{\textup T}}
\newcommand \dtransp [1] { (#1^{\textup T})^{\textup T}}
\newcommand \alphref[1] {\ref{#1}} 
\newcommand \source[1] {\textup{source}(#1)}
\newcommand \sink[1] {\textup{sink}(#1)}
\newcommand \thehead {\textup{head}}
\newcommand \thetail {\textup{tail}}
\newcommand \head[1] {\textup{\thehead}(#1)}
\newcommand \tail[1] {\textup{\thetail}(#1)}
\newcommand \Bdn {B_{\textup{dn}}}
\newcommand \Bup {B_{\textup{up}}}
\newcommand \Sid{S_{\textup{id}}}
\newcommand \theinc {\textup{inc}}
\newcommand \theout {\textup{out}}
\newcommand \inc[1] {\theinc(#1)}
\newcommand \out[1] {\theout(#1)}
\newcommand \bnd[1] {\textup{bnd}(#1)}
\newcommand \lbnd[1] {\textup{bnd}_{\textup{lft}}(#1)}
\newcommand \rbnd[1] {\textup{bnd}_{\textup{rght}}(#1)}
\newcommand \ovl [1]{\overline{#1}}
\newcommand \ul[1] {\underline{#1}}
\renewcommand \phi{\varphi}
\newcommand \Nnul {{\mathbb N}_0}
\newcommand \Nplu {{\mathbb N^+}}
\newcommand{\tbf}{\textbf}
\newcommand{\set}[1]{\{#1\}}
\newcommand\figwidthcoeff{0.95}
\newcommand\DeleteThis[1]{}
\newcommand\MaybeElsewhere[1]{}
\newcommand \red[1]{{\textcolor{red}{#1}\color{black}}}
\begin{document}

\title[Duality for paired bipolar graphs and submodule lattices]
{Duality for  pairs of upward bipolar plane graphs and submodule lattices}

\author[G.\ Cz\'edli]{G\'abor Cz\'edli}
\email{czedli@math.u-szeged.hu}
\urladdr{http://www.math.u-szeged.hu/~czedli/}
\address{University of Szeged, Bolyai Institute. 
Szeged, Aradi v\'ertan\'uk tere 1, HUNGARY 6720}

\begin{abstract} 
Let $G$ and $H$ be acyclic, upward bipolarly oriented plane graphs with the same number $n$ of edges. While $G$ can symbolize a flow network, $H$ has only a controlling role. Let  $\phi$ and $\psi$ be bijections from $\{1$, \dots, $n\}$ to the edge set of $G$ and that of $H$, respectively; their role is to define, for each edge of $H$, the \emph{corresponding} edge of $G$. Let $b$ be an element of an Abelian group $\alg A$. An  $n$-tuple $(a_1$, $\dots$, $a_n)$ of elements of $\alg A$ is a \emph{solution} of the \emph{\tpbg{} problem} $P:=(G,H$, $\phi,\psi$, $\alg A, b)$ if 
whenever $a_i$ is the ``all-or-nothing-flow'' capacity of the edge $\phi(i)$ for $i=1$, \dots, $n$ and $\vec e$ is a maximal directed path of $H$,
then by fully exploiting the capacities of the edges corresponding to the edges of $\vec e$ and neglecting the rest of the edges of $G$, we have a flow process transporting $b$ from the source (vertex) of $G$ to the sink of $G$. 
Let $P':=(H',G'$, $\psi',\phi'$, $\alg A, b)$, 
where $H'$ and $G'$ are the ``two-outer-facet'' duals of $H$ and $G$, respectively, and $\psi'$ and $\phi'$ are defined naturally. We prove that $P$ and $P'$ have the same solutions.
This result implies George Hutchinson's self-duality theorem on submodule lattices.
\end{abstract}

\dedicatory{Dedicated to M\'arta Madocsai, my brother Gy\"orgy, and to the memory of Mrs.\ P\'aln\'e Haraszti (born Anna Misk\'o), members of an old summer team}

\thanks{This research was supported by the National Research, Development and Innovation Fund of Hungary, under funding scheme K 138892.  \hfill{\red{\tbf{\datum}}}}

\subjclass {06C05, 05C21}


\keywords{Upward plane graph, edge capacity,  George Hutchinson's self-duality theorem, lattice identity}

\maketitle

\section{Introduction}

We present and prove the main result in Sections \ref{sect:intrexample}---\ref{sect:thm}, intended to be readable for most mathematicians.  
Section \ref{sect:hutch}, an application of the preceding sections, presupposes a modest familiarity with some fundamental concepts from (universal) algebra and, mainly, from lattice theory.

Sections \ref{sect:intrexample}--\ref{sect:thm} prove a duality theorem, Theorem \ref{thm:main}, for some pairs of finite, oriented planar graphs. The first graph, $G$, is a flow network with edge capacities belonging to a fixed Abelian group. The other graph plays a controlling role: each of its maximal paths determines a set of edges of $G$ to be used at their full capacities while neglecting the rest of the edges; see Section \ref{sect:intrexample} for a preliminary illustration.

Section \ref{sect:hutch} applies Theorem \ref{thm:main} to give a new and elementary proof of George Hutchinson's self-duality theorem on identities that hold in submodule lattices; 
our approach is simpler (mainly conceptually simpler) than the earlier ones.

The aforementioned two parts of the paper are interdependent. The second part, Section \ref{sect:hutch}, is based upon the first part (Sections \ref{sect:intrexample}--\ref{sect:thm}), while the necessity for a suitable tool in the second part led to the creation of the first part.

\section{An introductory example}\label{sect:intrexample}
Before delving into the technicalities of Section \ref{sect:problem}, consider the following example. 

\begin{figure}[ht] 
\centerline{ \includegraphics[width=\figwidthcoeff\textwidth]{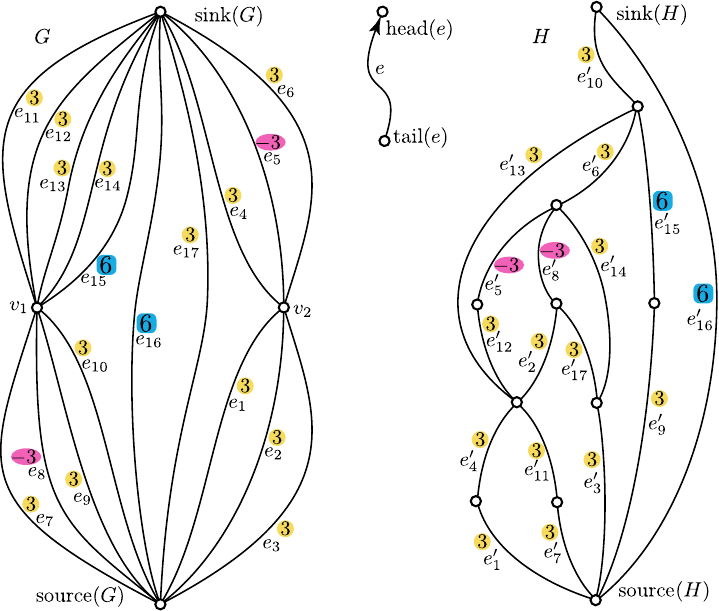}} \caption{An introductory example}\label{fig:egy}
\end{figure}

In Figure \ref{fig:egy}, $G$ and $H$ are oriented graphs. With the convention that \emph{every edge is upward oriented} (like in the case of Hasse diagrams of partially ordered sets), the arrowheads are omitted. The subscripts $1,\dots, 17$ supply a bijective correspondence between the edge set of $G$ and that of $H$. 
We can think of $G$ as a hypothetical \emph{concrete} system in which the arcs (i.e., the edges) are transit routes,  pipelines, fiber-optic cables, or freighters (or passenger vehicles) traveling on fixed routes, etc. 
The numbers in colored geometric shapes are the \emph{capacities} of the arcs of $G$. (Even though we repeat these numbers on the arcs of $H$, they still mean the capacities of the corresponding arcs of $G$; the arcs of $H$ have no capacities.)
These numbers are ``\emph{all-or-nothing-flow}'' capacities, that is, each arc should be either used at full capacity or avoided; this stipulation is due to physical limitations or economic inefficiency. (However, there can be parallel arcs with different capacities; see, for example, $e_{13}$ and $e_{15}$.) The vertices of $G$ are repositories (or warehouses, depots, etc.).
In contrast to $G$, the graph $H$ is to provide visual or digital information within a hypothetical control room.
Each maximal directed path of $H$ defines a method to transport exactly $6$ units  (such as pieces, tons, barrels, etc.) of something from $\source G$ to $\sink G$ without changing the final contents of other repositories. 
For example, $(e'_9$, $e'_{15}$, $e'_{10})$ is a maximal directed path of $H$; its meaning for $G$ is that we use exactly the arcs $e_9$, $e_{15}$, and $e_{10}$ of $G$. Namely, we use 
$e_9$ to transport $3$ (units of something) from $\source G$ to $v_1$, $e_{15}$ to transport 6 from $v_1$ to $\sink G$, and $e_{10}$ to transport 3 from $\source G$ to $v_1$. 
Depending on the physical realization of $G$, we can use $e_9$, $e_{15}$ and $e_{10}$ in this order, in any order, or simultaneously.
No matter which 
of the ten maximal paths of $H$ we choose, the result of the transportation is the same.
The negative sign of $-3$ at $e_5$ and $e_8$ means that the arc is to transport 3 in the opposite direction (that is, downward).  The scheme of transportation just described is very adaptive. Indeed, when choosing one of the ten maximal paths of $H$, several factors like speed, cost, the operational conditions of the edges, etc.\ can be taken into account.

\section{Paired-bipolar-graphs problems and schemes}\label{sect:problem}
First, we recall some, mostly well-known and easy, concepts and fix our notations. They are not unique in the literature, but we try to use the most expressive ones. We go mainly after
Auer at al.\ \cite{aueratal}\footnote{At the time of writing, freely available at 
\href{http://dx.doi.org/10.1016/j.tcs.2015.01.003}{http://dx.doi.org/10.1016/j.tcs.2015.01.003}  .}
and
Di Battista at al.\ \cite{dibattista}\footnote{At the time of writing, freely available at \href
{https://doi.org/10.1016/0925-7721(94)00014-X}
{https://doi.org/10.1016/0925-7721(94)00014-X} .}.
In the present paper, \emph{every} graph is assumed to be \emph{finite} and \emph{directed}. Sometimes, we say \emph{digraph} to emphasize that our graphs are directed. 
A (directed edge) $e$ of a graph starts at its \emph{tail}, denoted by  $\tail e$,  and ends at its \emph{head}, denoted by $\head e$. Occasionally, we say that $e$ \emph{goes} from $\tail e$ to $\head e$; see the middle of Figure \ref{fig:egy}. 
We can also say that $e$ is an \emph{outgoing edge} from $\tail e$ and an \emph{incoming edge} into $\head e$.  For a vertex $c$, let $\inc c$ and $\out c$ stand for the \emph{set of edges incoming into} $c$ and that of \emph{edges outgoing from $c$}, respectively. 
Sometimes, $\head e$ is denoted by an arrowhead put on $e$.
The \emph{vertex set} (set of all vertices)  and the \emph{edge set}  of a graph $G$ are denoted by $V(G)$ and $E(G)$, respectively. 
The graph containing no directed cycle is said to be  \emph{acyclic}. Such a graph has no loop edges, since there is no cycle of length 1, and it is \emph{oriented}, that is, $\inc{\tail e}\cap \out{\head e}=\emptyset$ for all $e\in E(G)$.
A vertex $c\in V(G)$ is a \emph{source} or a \emph{sink} if $\inc c=\emptyset$ or $\out c=\emptyset$, respectively.
A \emph{bipolarly oriented graph} or, briefly saying, a  \emph{bipolar graph} is an acyclic digraph that has exactly one source, has exactly one sink, and has at least two vertices. 
For such a graph $G$, $\source G$ and $\sink G$ denote the source and the sink of $G$, respectively. The uniqueness of $\source G$ and that of $\sink G$ imply that in a bipolar graph $G$,
\begin{equation}
\text{each maximal directed path goes from }\source G\text{ to }\sink G.
\label{eq:sGtosG}
\end{equation}
Next, guided by Section \ref{sect:intrexample} and Figure \ref{fig:egy},
we introduce the concept of a
\emph{\tpbg{} problem}.  
This problem with one of its solutions forms a \emph{\tpbg{} scheme}.  
For sets $X$ and $Y$, $X^Y$ denotes the set of functions from $Y$ to $X$.

\begin{definition}\label{def:solset} \ 

\begin{enumerate}[label=\upshape(pb\arabic*), wide, labelwidth=!, labelindent=\parindent]
\item
Assume that $G$ and $H$  are bipolar graphs with the same number $n$ of edges. Assume also that  $\phi\colon \set{1,\dots,n}\to E(G)$ and 
$\psi\colon \set{1,\dots,n}\to E(H)$ are bijections,  $e_i:=\phi(i)$ and $e'_i:=\psi(i)$ for $i\in\set{1,\dots,n}$; then $\phi\circ\psi^{-1}\colon E(H)\to E(G)$ defined by $e'_i\mapsto e_i$ is again a bijection. 
Let $\alg A=(A;+)$ be an Abelian group, and let $b$ be an element of $A$. (In Section \ref{sect:intrexample}, $\alg A=\alg Z$, the additive group of all integers, and $b=6$.)

\item\label{def:solsetb}
By a \emph{system of contents} we mean a function $S\colon V(G)\to A$, i.e., a member of $A^{V(G)}$. For $v\in V(G)$,   $S(v)\in A$ is the \emph{content} of $v$. 
The following three systems\footnote{The notations of these systems and other acronyms
 are easy to locate in the PDF of the paper. For example, in most PDF viewers, a search for ``Cntinit'' or ``bnd(''  gives the (first) occurrence of $\initcnt G b$ or $\bnd G$ (to be defined later), respectively.}
of contents deserve particular interest. 
The \emph{$b$-initial system of contents} is the function $\initcnt G b\colon V(G)\to A$ defined by 
\begin{equation*}
\initcnt G b(v)=
\begin{cases}
b, &\text{if }v=\source G,\cr
0=0_{\alg A}, &\text{if }v\in V(G)\setminus\set{\source G}.
\end{cases}
\end{equation*}
The \emph{$b$-terminal system of contents} is $\termcnt G b\colon V(G)\to A$ defined by 
\begin{equation*}
\termcnt G b(v)=
\begin{cases}
b, &\text{if }v=\sink G,\cr
0, &\text{if }v\in V(G)\setminus\set{\sink G}.
\end{cases}
\end{equation*}
The \emph{$b$-transporting system of contents} is $\btranspcnt G b\colon V(G)\to A$ defined by
\begin{equation}
\btranspcnt G b(v)=
\begin{cases}
-b, &\text{if }v=\source G,\cr
b, &\text{if }v=\sink G,\cr
0, &\text{if }v\in V(G)\setminus\set{\source G,\sink G}.
\end{cases}
\label{eq:btranspcnt}
\end{equation}

\item
With respect to the pointwise addition, the systems of contents form an Abelian group, namely, a direct power of $\alg A$. The computation rule in this group is that  $(S^{(1)}\pm S^{(2)})(u)=S^{(1)}(u)\pm S^{(2)}(u)$ for all $u\in V(G)$. For example, $\termcnt G b=\initcnt G b + \btranspcnt G b$.

\item
Let $\vec a:=(a_1,\dots,a_n)\in A^n$ be an $n$-tuple  of elements of $A$. The \emph{effect of an edge $e'_j$ of $H$ on $G$ with respect to $\vec a$} is the system $\edgecnt G{\vec a}{e'_j}$ of contents 
 defined by
\begin{equation}\edgecnt G{\vec a}{e'_j}(u):=
\begin{cases}
-a_j,&\text{if }u=\tail{e_j},\cr
a_j,&\text{if }u=\head{e_j},\cr
0,&\text{if }u\in V(G)\setminus\set{\tail{e_j},\head{e_j}};
\end{cases}
\label{eq:lRkSprdNnt}
\end{equation}
note that  $e'_j\in E(H)$ occurs on the left but $e_j\in E(G)$ on the right.

\item
For $\vec a:=(a_1,\dots,a_n)\in A^n$ and a directed path $\vvec e:=(e'_{j_1},e'_{j_2},\dots,e'_{j_k})$ in $H$ or a $k$-element subset $X=\set{e'_{j_1},e'_{j_2},\dots,e'_{j_k}}$ of $E(H)$, the 
\emph{effect of  $\vvec e$ or $X$ on $G$ with respect to $\vec a$}
is the following system of contents:
\begin{equation}
\esetcnt G{\vec a}{\set{e'_{j_1},\dots,e'_{j_k}}} := \sum_{i=1}^k 
\edgecnt G{\vec a}{e'_{j_i}}.
\label{eq:hpKpkRsmnRh}
\end{equation}

\item
The \emph{\tpbg{} problem} is the 6-tuple  $(G,H$,  $\phi,\psi$, $\alg A, b)$, which we denote by 
\begin{equation}
\problem(G,H,\, \phi,\psi,\, \alg A, b).
\label{eq:MCDTP}
\end{equation}
We say that $\vec a:=(a_1,\dots,a_n)\in A^n$   is a \emph{solution} of this \tpbg{} problem 
if for each maximal directed path $\vvec e:=(e'_{j_1},e'_{j_2},\dots,e'_{j_k})$ in $H$,  
\begin{equation}
\esetcnt G{\vec a}{\set{e'_{j_1},\dots,e'_{j_k}}} =
\btranspcnt G b.
\label{eq:wWrsKHz}
\end{equation}

\item
If $\vec a:=(a_1,\dots,a_n)\in A^n$ is a solution of $\problem(G,H$,  $\phi,\psi$, $\alg A, b)$, then we say that the 7-tuple 
 $(G,H$,  $\phi,\psi$, $\alg A, b,\vec a)$
is \emph{\tpbg{} scheme} and we denote this scheme by 
\begin{equation}
\scheme(G,H,\, \phi,\psi,\, \alg A, b,\vec a).
\label{eq:PBGS}
\end{equation}
\end{enumerate}
\end{definition}

For example, Figure \ref{fig:egy} determines $\problem(G,H,$ $\phi,\psi,$ $\alg Z, 6)$, where $\alg Z$ is the additive group of integers. As the numbers in colored geometric shapes form a solution, the figure defines a \tpbg{} scheme, too.  Even though we do not use the following two properties of the figure, we mention them. First, the \tpbg{} problem defined by the figure  has exactly one solution. Second,   if $k\in\Nplu:=\set{1,2,3,\dots}$,  we change $\alg Z$ to the $(2k)$-element additive group of integers modulo $2k$, and $b$ is (the residue class of) $1$ rather than 6,  then the problem determined by the figure has no solution.

The next section says more about \tpbg{} problems but only for specific bipolar graphs, including those in Figure \ref{fig:egy}.

\section{Bipolar plane graphs and the main theorem}\label{sect:thm}
For digraphs $G_1$ and $G_2$, a pair  $(\gamma,\chi)$ of functions is an \emph{isomorphism} from $G_1$ onto $G_2$ if 
both $\gamma\colon V(G_1)\to V(G_2)$ and $\chi\colon E(G_1)\to E(G_2)$ are bijections, and
for $e\in E(G_1)$, $\gamma(\tail e)=\tail{\chi(e)}$ and $\gamma(\head e)=\head{\chi(e)}$.
Hence,  $V(G_i)$ and $E(G_i)$ have been abstract sets and, in essence, a graph $G_i$ has been the system $(V(G_i),E(G_i), \thetail,\thehead)$ so far. 
However, in case of a \emph{plane graph} $G$, 
$V(G)$ is a finite subset of the plane $\mathbb R^2$ and $E(G)$ consists of oriented Jordan arcs (i.e., homeomorphic images of $[0, 1]\subseteq \mathbb R)$ such that each arc $e\in E(G)$ goes from a vertex $\tail e\in V(G)$ to a vertex $\head e\in V(G)$; see the middle part of Figure \ref{fig:egy}. On the other hand, $G$ is a \emph{planar graph} if it is isomorphic to a plane graph. Note the difference: a plane graph is always a planar graph but not conversely.

The \emph{boundary} $\bnd G$ of a plane graph consists of those arcs of $G$ that can be reached (i.e., each of their points can be reached) from any sufficiently distant point of the plane by walking along an open Jordan curve crossing no arc of the graph. Usually, we cannot define the boundary of a planar (rather than plane) graph.

\begin{definition}\label{def:upwBPG}
An \emph{upward bipolar plane graph}\footnote{\label{foot:widenupward}To widen the scope of the main result, our definition of ``upward'' is seemingly more general than the standard one occurring in the literature. However, up to graph isomorphism, our definition is equivalent to the standard one in which ``upward'' has its visual meaning; see Theorem \ref{thm:ascending}, taken from Platt \cite{platt},  later. Furthermore, if we went after the standard definition, then we should probably call the duals of these graphs ``rightward'', so we should introduce one more concept.} 
is  a bipolar plain graph $G$ such that both $\source G$ and $\sink G$ are on the boundary of $G$. An \emph{upward bipolarly oriented planar graph} is a digraph isomorphic to an upward bipolar plane graph.
\end{definition}

Next, let $G$ be an upward bipolar plane graph. The arcs of $G$ divide the plane into regions. Exactly one of these regions is geometrically unbounded; we call the rest of the regions \emph{inner facets}.  
Take a Jordan curve $C$ such\footnote{We stipulate that $C$ has exactly one point at infinity and, if possible, $C$ is a projective line.} that $C$ connects $\source G$ and $\sink G$ in the \emph{projective plane} and the affine part $C'$  (the set of those points of $C$ that are not on the line at infinity)  lies in the unbounded region. Then $C'$ divides the unbounded region into two parts called \emph{outer facets} . In Figure \ref{fig:ketto}, $C'$ is the union of the two thick dotted half-lines.  The \emph{facets} of $G$ are its inner facets and the two outer facets. In Figure \ref{fig:ketto}, any two facets of $G$ sharing an arc are indicated by different colors (or by distinct shades in a grey-scale version).

\begin{figure}[ht] 
\centerline{ \includegraphics[width=\figwidthcoeff\textwidth]{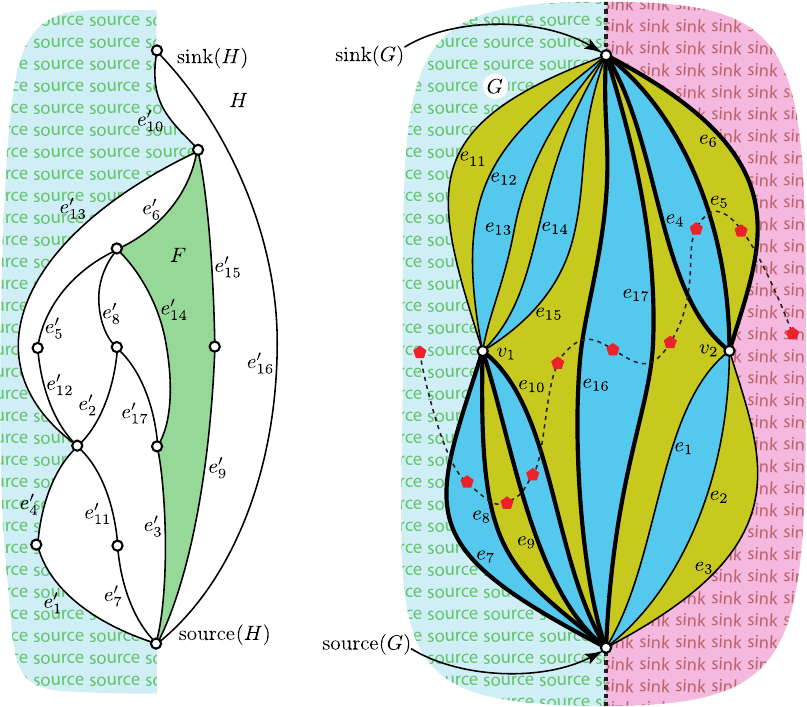}} \caption{A facet $F$ of $H$ and the facets of $G$}\label{fig:ketto}
\end{figure}

\begin{definition}\label{def:dualgraph}
For an upward bipolar plane graph $G$, we define the \emph{dual} of $G$, which we denote by $\dual G$, in the following way. Let $E(\dual G)$ be the set of all facets of $G$, including the two outer facets. For each edge $e\in G$, we define the \emph{dual edge} $\dual e$, as follows. Let $\tail{\dual e}$ and $\head{\dual e}$ be the two facets such that the arc $e$ is on their boundaries. Out of these two facets, $\tail{\dual e}$ is the one on the left when we walk along $e$ from $\tail e$ to $\head e$\footnote{\label{foot:lhrule}Miller and Naor \cite{millernaor} call this the ``left-hand rule'', since if our left thumb points in the direction of $e$, then the left index finger shows the direction of $\dual e$.}, while the other facet is $\head{\dual e}$. The edge set of $\dual G$ is $E(\dual G):=\{\dual e: e\in E(G)\}$. In Figure \ref{fig:ketto}, $\source {\dual G}$ and $\sink {\dual G}$ are the left outer facet and the right outer facet. (Only a bounded part of each of these two geometrically unbounded facets is drawn.) 
Note that $C'$ occurring before this definition belongs neither to $E(G)$ nor to $E{\dual G}$.
\end{definition}

Note that there are isomorphic upward bipolar \emph{plane} graphs $G_1$ and $G_2$ such that $\dual{G_1}$ and $\dual{G_2}$ are non-isomorphic; this is why we cannot define the dual of an upward bipolarly oriented \emph{planar} graph. 
Observe that the dual of an upward bipolar plane graph is a bipolar graph\footnote{This is why Definition \ref{def:dualgraph} deviates from the literature, where $\dual G$ has only one outer facet, the outer region. Fact \ref{fact:Pldual}, to be formulated later, asserts more.}, so the following definition makes sense.

\begin{definition}\label{def:dualproblem} 
With upward bipolar plane graphs $G$ and $H$, let $P:=\problem(G,H$, $\phi,\psi$, $\alg A, b)$ 
be a \tpbg{} problem;  see  \eqref{eq:MCDTP}. Define the bijections $\dual\psi \colon \set{1,\dots,n}\to E(\dual H)$ 
and $\dual\phi \colon \set{1,\dots,n}\to E(\dual G)$ by $\dual\psi(i):= \vdual {e'_i}=\dual{\psi(i)}$ and 
$\dual\phi(j):= \dual {e_j}=\dual{\phi(j)}$, respectively, for $i,j\in\set{1,\dots,n}$; here $\dual{e_j}$ and $\vdual{e'_i}$ are edges of the dual graphs defined in Definition \ref{def:dualgraph}. 
Then the \emph{dual of the \tpbg{} problem}  $P$ is the \tpbg{} problem
\begin{equation}
\dual P:=\problem(\dual H,\dual G,\,\dual\psi, \dual\phi,\, \alg A, b).
\label{eq:dualproblem}
\end{equation}
Briefly and roughly saying, we obtain the dual problem by interchanging the two graphs and dualizing both.
\end{definition}

Next, based on \eqref{eq:MCDTP}, \eqref{eq:PBGS}, and \eqref{eq:dualproblem}, we state our main theorem and a corollary.

\begin{theorem}\label{thm:main}
Let $P:=\problem(G,H$, $\phi,\psi$, $\alg A, b)$ be a \tpbg{} problem such that both $G$ and $H$ are upward bipolar plane graphs. Then $P$ and the dual problem $\dual P$ have exactly the same solutions.
\end{theorem}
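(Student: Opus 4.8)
The plan is to introduce a single, manifestly self-dual \emph{crossing criterion} on $\vec a$ and to show that it is equivalent both to $\vec a$ solving $P$ and to $\vec a$ solving $\dual P$. For a maximal directed path $\vvec e$ in $H$ let $J(\vvec e)\subseteq\set{1,\dots,n}$ collect the indices $j$ with $e'_j$ on $\vvec e$, and for a maximal directed path $\vvec f$ in $\dual G$ let $J(\vvec f)$ collect the indices $j$ with $\dual{e_j}$ on $\vvec f$. The criterion, which I will call $(\star)$, is
\[
\textstyle\sum_{j\in J(\vvec e)\cap J(\vvec f)}a_j=b\quad\text{for all such }\vvec e\text{ and }\vvec f.
\]
Both reductions rest on reading the defining equation \eqref{eq:wWrsKHz} as a flow condition: by \eqref{eq:lRkSprdNnt} and \eqref{eq:hpKpkRsmnRh}, $\vec a$ solves $P$ iff for every maximal $\vvec e$ the edge assignment $f$ on $G$ given by $f(e_j)=a_j$ for $j\in J(\vvec e)$ and $f=0$ elsewhere has net-inflow function $\btranspcnt G b$, i.e.\ is a flow sending $b$ from $\source G$ to $\sink G$.

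First I would isolate an elementary, purely order-theoretic fact about bipolar graphs. Let $K$ be a bipolar graph and $\delta\colon E(K)\to A$. I claim that $\delta$ is a \emph{tension}, i.e.\ $\delta(e)=g(\head e)-g(\tail e)$ for some $g\colon V(K)\to A$ with $g(\sink K)-g(\source K)=c$, \emph{iff} the sum $\sum_{e\in\vvec e}\delta(e)$ equals the same value $c$ along every maximal directed path $\vvec e$ of $K$ (all of which run from $\source K$ to $\sink K$ by \eqref{eq:sGtosG}). The forward direction is a telescoping sum. For the converse I would fix a topological order of $V(K)$, note that every vertex is reachable from $\source K$ and reaches $\sink K$ (walk against, respectively along, the edges; acyclicity and uniqueness of the poles force termination at the poles), and set $g(v)$ to be the $\delta$-sum of any directed $\source K$-to-$v$ path. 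Well-definedness is the only point: two such paths $P_1,P_2$ are completed by a common directed $v$-to-$\sink K$ path $Q$, and topological monotonicity makes $P_1\cdot Q$ and $P_2\cdot Q$ genuinely simple maximal paths with equal $\delta$-sums $c$, whence $P_1$ and $P_2$ already have equal $\delta$-sums.

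Second I would invoke planar duality in the form: for an upward bipolar plane graph $G$, a flow $f$ sending $b$ from $\source G$ to $\sink G$ is exactly the ``left-minus-right'' derivative of a facet potential, i.e.\ there is $g\colon V(\dual G)\to A$ with $g(\head{\dual e})-g(\tail{\dual e})=f(e)$ for all $e$ and $g(\sink{\dual G})-g(\source{\dual G})=b$; conversely every such $g$ yields such a flow. This is the classical bijection between circulations and facet potentials, applied after closing $f$ into a circulation by a return arc of value $b$ along the curve $C'$, whose two sides are $\source{\dual G}$ and $\sink{\dual G}$. Feeding this $g$ into the tension criterion \emph{on the bipolar graph $\dual G$} and telescoping along a maximal directed path $\vvec f$ of $\dual G$ (which runs from $\source{\dual G}$ to $\sink{\dual G}$) converts ``$f$ is a $b$-flow'' into ``$\sum_{j\in J(\vvec e)\cap J(\vvec f)}a_j=b$ for every $\vvec f$''. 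Ranging over all $\vvec e$ gives ``$\vec a$ solves $P$'' $\Leftrightarrow(\star)$. Running the identical argument with the roles of $G,H$ played by $\dual H,\dual G$, and using $\ddual H=H$ so that the facets of $\dual H$ are the vertices of $H$ and $\vdual{e'_j}$ dualizes back to $e'_j$, gives ``$\vec a$ solves $\dual P$'' $\Leftrightarrow(\star)$ with the \emph{same} criterion; the two equivalences then prove the theorem.

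The main obstacle I expect is the planar bookkeeping behind this second ingredient. One must get the orientations right under the left-hand rule so that the facet potential's jump across $\dual e$ is $+f(e)$ rather than $-f(e)$, confirm that the return arc along $C'$ contributes exactly $g(\sink{\dual G})-g(\source{\dual G})=b$, and justify the double-dual identification $\ddual H=H$ together with the fact that $\dual G$ and $\dual H$ are again bipolar with the left and right outer facets as their poles. These are precisely the structural properties of the two-outer-facet dual recorded in the facts preceding the theorem, so the remaining effort is to apply them with careful sign tracking; the combinatorial heart, the tension criterion, is entirely elementary.
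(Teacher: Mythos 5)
Your proposal is correct, and it takes a genuinely different route from the paper's. You reduce both ``$\vec a$ solves $P$'' and ``$\vec a$ solves $\dual P$'' to one manifestly self-dual crossing criterion $(\star)$, using two ingredients: an elementary tension criterion on bipolar graphs (a potential with prescribed pole difference exists iff the edge sums along all maximal directed paths agree, which is sound because \eqref{eq:sGtosG} and acyclicity make your path-completion argument work), and the classical planar correspondence between $b$-flows and facet potentials, applied after closing the flow by a return arc along $C'$. The paper instead argues asymmetrically: after normalizing $G$ and $H$ to be ascending via Theorem \ref{thm:ascending}, it verifies directly that a solution of $P$ satisfies the defining equations \eqref{eq:wWrsKHz} of $\dual P$, facet by facet (the $L$/$R$ subtraction trick \eqref{eq:wszBmnT} for inner facets, the left boundary path for $\source{\dual H}$, and the zero-sum identity \eqref{eq:szdhKrjPb} for $\sink{\dual H}$), with the geometric content carried by the Jordan curve $J$ and the splitting $V(G)=\Bdn\cup\Bup$; the converse direction is then obtained from Fact \ref{fact:Pldual}, the isomorphism $\ddual G\cong\transp G$, and the sign-flip observation \eqref{eq:stFbkZdhVsN}. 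In effect, the paper's $\Bup$-summations \eqref{eq:lKtKblrHznt} and \eqref{eq:krGhpNbrZm} are a hands-on proof of exactly your crossing-sum identities, so the two arguments share their geometric core. What your route buys is symmetry: both problems reduce to the same condition, so there is no separate converse direction and almost no transpose bookkeeping. What it costs is self-containedness: the circulation/facet-potential duality is precisely the nontrivial planar content, and invoking it as a classical black box (legitimate over an arbitrary Abelian group, but a genuine theorem) would undercut the paper's stated aim (Remark \ref{rem:elMnrPln}) of an elementary proof of Hutchinson's theorem; to keep that aim you would have to prove it, which is essentially what the paper's Jordan-curve computation does.

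One error to correct: the identification $\ddual H=H$ is false under the left-hand rule of Definition \ref{def:dualgraph}. Dualizing twice reverses every orientation, and the paper proves $\ddual H\cong\transp H$; accordingly, $\vdual{e'_j}$ dualizes back to the reverse of $e'_j$, not to $e'_j$ itself. This is harmless for your argument, because $(\star)$ depends only on the \emph{edge sets} of maximal directed paths, and these are the same for $H$ and $\transp H$; but the step must be stated and used in this corrected form, since orientation slips at exactly this point are what the paper's careful treatment of $\transp G$, $\transp H$, and \eqref{eq:stFbkZdhVsN} is designed to avoid.
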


This theorem, to be proved soon, trivially implies the following statement.

\begin{corollary}
For $G$, $H$, $\phi$, $\psi$, $\alg A$, and $b$ as in Theorem \ref{thm:main} and for every $\vec a$, $\scheme(G,H$, $\phi,\psi$, $\alg A, b,\vec a)$ is a \tpbg{} scheme if and only if so is
$\scheme(\dual H,\dual G$, $\dual \psi,\dual\phi$, $\alg A, b,\vec a)$.  
\end{corollary}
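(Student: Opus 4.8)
The plan is to derive the corollary directly from Theorem \ref{thm:main} by nothing more than unfolding the definition of a \scheme{} and then invoking the equality of solution sets that the theorem supplies. No graph-theoretic or group-theoretic argument is needed at this stage, since all such content is already packaged inside Theorem \ref{thm:main}; the corollary is, as announced, a formal consequence.

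First I would recall the definition of a \scheme{} given in Definition \ref{def:solset}: a 7-tuple $\scheme(G,H,\phi,\psi,\alg A,b,\vec a)$ is, by definition, a \tpbg{} scheme exactly when its first six entries form the \tpbg{} problem $P:=\problem(G,H,\phi,\psi,\alg A,b)$ and its seventh entry $\vec a$ is a \emph{solution} of $P$. Applying the identical definition to the second 7-tuple, I obtain that $\scheme(\dual H,\dual G,\dual\psi,\dual\phi,\alg A,b,\vec a)$ is a \tpbg{} scheme exactly when $\vec a$ is a solution of the \tpbg{} problem $\problem(\dual H,\dual G,\dual\psi,\dual\phi,\alg A,b)$. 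The single point deserving a moment's care is to identify this second problem with Definition \ref{def:dualproblem}: comparing its six entries one by one (the two graphs $\dual H$ and $\dual G$, the two bijections $\dual\psi$ and $\dual\phi$, and the unchanged $\alg A$ and $b$) shows that it is literally the dual problem $\dual P$ of $P$ as defined in \eqref{eq:dualproblem}.

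With these two identifications in place, the two scheme conditions read precisely ``$\vec a$ is a solution of $P$'' and ``$\vec a$ is a solution of $\dual P$''. Theorem \ref{thm:main} asserts that $P$ and $\dual P$ have exactly the same solutions, so these two conditions are equivalent for every $\vec a$. Chaining the resulting three biconditionals gives that $\scheme(G,H,\phi,\psi,\alg A,b,\vec a)$ is a scheme if and only if $\scheme(\dual H,\dual G,\dual\psi,\dual\phi,\alg A,b,\vec a)$ is, which is the claim. Since the only substantive input is Theorem \ref{thm:main}, I anticipate no genuine obstacle; the lone thing to check with attention is the bookkeeping that matches the underlying problem of the second scheme with $\dual P$ in the exact order of its entries, after which the equivalence is immediate.
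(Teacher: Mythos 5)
Your proposal is correct and takes essentially the same route as the paper, which offers no separate proof but states that Theorem \ref{thm:main} ``trivially implies'' the corollary. Your unfolding of the scheme definition from Definition \ref{def:solset}, the bookkeeping identification of $\problem(\dual H,\dual G,\dual\psi,\dual\phi,\alg A,b)$ with $\dual P$ from Definition \ref{def:dualproblem}, and the appeal to the equality of solution sets is exactly that trivial implication, made explicit.
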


An arc $\{(x(t),y(t)): 0\leq t\leq 1\}$ in the plane is \emph{strictly ascending} if $y(t_1)<y(t_2)$ for all $0\leq t_1<t_2\leq 1$.  A plane graph is \emph{ascending} if all its arcs are strictly ascending. Platt \cite{platt} proved\footnote{Indeed, as $\source G$ and $\sink G$ are on $\bnd G$, we can connect them by a new arc without violating planarity. Furthermore, we can add parallel arcs to any arc. Thus, Platt's result applies.}  the following result, mentioned also in Auer at al.\ \cite{aueratal}.

\begin{theorem}[Platt \cite{platt}]\label{thm:ascending} 
Each upward bipolar plain graph is isomorphic to an upward bipolar ascending plain graph.
\end{theorem}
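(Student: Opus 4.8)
The plan is to realize the abstract digraph underlying $G$ as a planar drawing in which the vertical coordinate strictly increases along every edge; since in any ascending drawing the global minimum-$y$ and maximum-$y$ vertices can have, respectively, no incoming and no outgoing edge, they must be $\source G$ and $\sink G$ and they lie on the boundary (nothing is below the lowest vertex, nothing above the highest), so such a drawing is automatically an upward bipolar ascending plane graph isomorphic to $G$. I would fix the heights by a longest-path rank: set $y(v)$ equal to the length of a longest directed path from $\source G$ to $v$. As $G$ is acyclic, $y(\tail e)<y(\head e)$ for every edge $e$ (ties between non-adjacent vertices are harmless and may be perturbed away), so any edge drawn as a curve monotone in $y$ between its endpoints is strictly ascending. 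The entire difficulty is therefore combinatorial-geometric: to choose horizontal positions and route the edges without crossings in a way compatible with the cyclic edge-orders forced by the given plane embedding.

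The key structural input is a \emph{bimodality} lemma: at every vertex of $G$ the incoming edges occur consecutively in the rotation and the outgoing edges occur consecutively, the incoming block lying ``below'' the outgoing one. I would prove this by counting \emph{switches}: call an angle of a facet at a vertex $v$ a source-switch (respectively, sink-switch) if the two facet-edges at $v$ both point away from (respectively, into) $v$; then the number of alternations between in- and out-edges in the rotation at $v$ equals twice the number of source-switches at $v$. Acyclicity forces every facet, including the two outer ones, to have at least one local minimum and one local maximum on its boundary, and the single-source/single-sink hypothesis together with Euler's formula pins the totals down so that each internal vertex has exactly one source-switch, i.e. its in- and out-edges are each contiguous. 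Dually, each inner facet then has a unique lowest and a unique highest vertex, and its boundary splits into a left directed path and a right directed path running from the low to the high vertex --- exactly the picture behind the two outer facets and the left-hand rule of Definition \ref{def:dualgraph}.

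With bimodality in hand I would read the horizontal coordinates off the dual structure emphasized in the paper: the left and right outer facets serve as a dual source and sink, the facets of $G$ carry their own acyclic left-to-right order, and assigning to each vertex an $x$-value by a longest-path rank in this facial order yields, around each vertex, a left-to-right order of edges that matches the embedding's rotation. Drawing every edge as the straight segment (or a gently bent monotone curve) between its now-fixed endpoints produces a planar, strictly ascending drawing on the same vertex and edge sets, hence isomorphic to $G$ as a digraph. The main obstacle is precisely this last compatibility step: one must verify that the $(x,y)$-placement induced by the two rank functions is consistent with the combinatorial embedding, so that no two edges cross. Equivalently, one can argue incrementally, inserting the vertices in a topological order and maintaining the invariant that the current upper contour is an $x$-monotone staircase on which the not-yet-used upward edges appear in their embedded order; the crossing-freeness of each single insertion is exactly where bimodality is invoked, and it is the crux of the proof.
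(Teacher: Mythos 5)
You have written a plan, not a proof, and the decisive step is the one you yourself set aside. Everything you do assemble --- longest-path ranks for the $y$-coordinates, the bimodality lemma, a facial/dual rank for the $x$-coordinates, an incremental insertion scheme --- is standard scaffolding from upward-planarity theory (it is essentially the machinery surveyed in Di Battista et al.\ \cite{dibattista} and Auer et al.\ \cite{aueratal}); the entire content of the theorem is concentrated in the step you label ``the crux'': showing that a placement consistent with these two rank functions can actually be realized by pairwise non-crossing strictly ascending arcs, or equivalently that your insertion invariant (the upper contour stays an $x$-monotone staircase carrying the pending edges in embedded order) can be maintained. You assert that bimodality is ``exactly where'' this is proved, but no such verification appears; a proof that stops where the difficulty begins establishes nothing. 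Note that the paper itself does not prove this statement either: Theorem \ref{thm:ascending} is imported from Platt \cite{platt}, the only argument being the footnote reduction (since $\source{G}$ and $\sink{G}$ lie on $\bnd{G}$, one may add an arc joining them without destroying planarity, and parallel arcs are harmless, so Platt's theorem applies), and Remark \ref{rem:elMnrPln} explicitly warns that a rigorous proof ``is not easy at all.'' The part it warns about is precisely the part you defer.

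Two further concrete weaknesses in what you did write. First, your proof of bimodality is itself a hand-wave: ``Euler's formula pins the totals down'' is not an argument. The honest route is a path argument: if in- and out-edges alternated four times around a vertex $v$, take two directed paths from $\source{G}$ to $v$ ending in the two in-edges; their union contains a cycle $C$ through $v$ which, by the assumed rotation, locally separates the two out-edges, so whichever region of $C$ contains $\sink{G}$, one of the two directed paths from $v$ to $\sink{G}$ must meet $C$ at a vertex $w\neq v$, and then the directed path $v\to w$ concatenated with the directed path $w\to v$ (inside an $\source{G}$-to-$v$ path) is a directed cycle, contradicting acyclicity. Second, you never say where the hypothesis that $\source{G}$ and $\sink{G}$ lie on $\bnd{G}$ enters; it is essential (planarity, acyclicity, and uniqueness of source and sink alone do \emph{not} imply the conclusion --- orient the octahedron with antipodal source and sink for a counterexample), and in your outline it is hidden in the appeal to the two outer facets when defining the $x$-ranks. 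Either carry out the insertion induction in full, with the invariant stated and proved, or do what the paper does and invoke Platt \cite{platt} as a black box via the footnote reduction.
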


\begin{proof}[Proof of Theorem \ref{thm:main}]
Let $P$ and $\dual P$ be as in the theorem.
Theorem \ref{thm:ascending}  allows us to assume that $G$ and $H$ are upward bipolar ascending plain graphs; see Figure \ref{fig:egy} for an illustration. 
As the graphs are ascending, Figure \ref{fig:egy} satisfactorily reflects generality. Note that the summation in \eqref{eq:hpKpkRsmnRh} does not depend on the order in which the edges of a directed path are listed. Hence, we often give a directed path by the set of its edges. 
We claim that for any nonempty $X\subseteq \set{1,2,\dots,n}$,
\begin{equation}
\sum_{v\in V(G)}\esetcnt G{\vec a}{\set{e'_i:i\in X}}(v) =0.
\label{eq:szdhKrjPb}
\end{equation} 
For $|X|=1$, this is clear by  \eqref{eq:lRkSprdNnt}. 
The $|X|=1$ case and  \eqref{eq:hpKpkRsmnRh} imply the general case of \eqref{eq:szdhKrjPb}, since
\begin{align*}
\sum_{v\in V(G)}\esetcnt G{\vec a}{\set{e'_i:i\in X}}(v)= \sum_{v\in V(G)}\sum_{i\in X}\esetcnt G{\vec a}{\set{e'_i}}(v),
\end{align*} 
and the two summations after the equality sign above can be interchanged.

Assume that $\vec a\in A^n$ is a solution of 
$P$.  To show that $\vec a$ is a solution of $\dual P$, too,  take a maximal directed path $\Gamma=\set{\dual {e_i}:i\in M}$ in $\dual G$.
In Figure \ref{fig:ketto}, $M=\{7,8,9,10$, $16,17$, $4,5,6\}$ and, furthermore, $\set{e_i: i\in M}$ consists of the thick edges of $G$.
Note that  \eqref{eq:sGtosG},  with $H$ instead of $G$, is valid for $\Gamma$. 
Denote by $V(\Gamma)$ the set of vertices of path $\Gamma$; it consists of some facets of $G$. To mark these facets in the figure and also for a later purpose, for each facet $X\in V(\Gamma)$, we pick a point called \emph{capital}\footnote{Since we think of the facets as path-connected countries on the map.} in the geometric interior of $X$.  
These capitals are the red pentagon-shaped points in Figure \ref{fig:ketto}. We assume that the capital of $\source{\dual G}$, the left outer facet, is far on the left, that is, its abscissa is smaller than that of every vertex of $G$. Similarly, the capital of $\sink{\dual G}$ is far on the right. 
We need to show that
\begin{equation}
C:=\esetcnt {\dual H}{\vec a}{\set{\dual {e_i}:i\
\in M}}\text{ and } D:=\btranspcnt {\dual H} b 
\label{eq:CDmMknRlh}
\end{equation}
are the same.
So we need to show that for all   $F\in V(\dual H)$,  $C(F)=D(F)$. 

First, we deal with the case when $F$ is an internal facet of $H$; see on the left of  Figure \ref{fig:ketto}. As $H$ is ascending, the set of arcs on the boundary $\bnd F$ of $F$ is partitioned into a left half $\lbnd F$  and a right half $\rbnd F$. Furthermore, all arcs on $\bnd F$ (as well as in $H$) are ascending. 
Let $L:=\{i: e'_i$ belongs to $\lbnd F\}$ and 
$R:=\{i: e'_i$ belongs to $\rbnd F\}$. In Figure \ref{fig:ketto},  $L=\set{3,14,6}$ and $R=\set{9,15}$. For a directed path $\vec g$, let $\tail {\vec g}$ and $\head{\vec g}$ denote the tail of the first edge and the head of the last edge of $\vec g$, respectively.

For later reference, we point out that this paragraph to prove (the forthcoming)  \eqref{eq:wszBmnT} uses only the following property of $L$ and $R$: the directed paths 
\begin{equation}
\set{e'_i: i\in L}\text{ and }\set{e'_i: i\in R}\text{ have the same tail and the same head.}
\label{eq:dpsTlsmHd}
\end{equation}
Take a subset $K\subseteq \set{1,\dots,n}$ such that $K\cap L=\emptyset$ and  $\set{e'_i:i\in K\cup L}$ is a maximal directed path in $H$. 
In Figure \ref{fig:ketto}, $K=\set{10}$. Note that $K\cap R=\emptyset$ and $\set{e'_i:i\in K\cup R}$ is also a maximal directed path in $H$.
As $\vec a$ is a solution of $\problem(G,H$,  $\phi,\psi$, $\alg A, b)$, 
\begin{equation}
\esetcnt G{\vec a}{\set{e'_i:i\in L\cup K}}=
\esetcnt G{\vec a}{\set{e'_i:i\in R\cup K}},
\label{eq:wRgmWhBrgF}
\end{equation}
simply because both are $\btranspcnt G b$. 
By \eqref{eq:hpKpkRsmnRh}, both sides of \eqref{eq:wRgmWhBrgF} are sums. Subtracting 
$\esetcnt G{\vec a}{\set{e'_i:i\in K}}$ from both sides, we obtain that 
\begin{equation}
\esetcnt G{\vec a}{\set{e'_i:i\in L}}=
\esetcnt G{\vec a}{\set{e'_i:i\in R}}.
\label{eq:wszBmnT}
\end{equation}

Connect the capitals of the facets belonging to $V(\Gamma)$ by an open Jordan curve $J$ such that for each $\dual e\in E(\dual G)\setminus \Gamma$, the arc $e\in E(G)$ and $J$ have no geometric point in common and, furthermore, for each $\dual e\in \Gamma$, $J$ and $e$ has exactly one geometric point in common and this point is neither $\tail e$ nor $\head e$. In Figure \ref{fig:ketto}, $J$ is the thin dashed curve.
Let $\Bdn:=\{v\in V(G): v$ is (geometrically) below $J\}$. Similarly, let $\Bup$ be the set of those vertices of $G$ that are above $J$. Note that $\Bdn\cup\Bup=V(G)$ and $\Bdn\cap\Bup=\emptyset$. In Figure \ref{fig:ketto},  $\Bdn=\set{\source G, v_2}$ and $\Bup=\set{\sink G, v_1}$. 
Consider the sum
\begin{align}
\sum_{v\in \Bup}
\esetcnt G{\vec a}{\set{e'_i:i\in L}}(v)
&=\sum_{v\in \Bup}\sum_{i\in L}\edgecnt G{\vec a}{e'_i}(v)\label{eq:sZgkZlLbka}
\\
&=\sum_{i\in L}\sum_{v\in \Bup}\edgecnt G{\vec a}{e'_i}(v),
\label{eq:sZgkZlLbkb}
\end{align}
where the first equality comes from \eqref{eq:hpKpkRsmnRh}.
If $\{\tail{e_i}$, $\head{e_i}\}\subseteq\Bup$, then  
\begin{equation*}
\edgecnt G{\vec a}{e'_i}(\tail{e_i})=-a_i
\text{ and }
\edgecnt G{\vec a}{e'_i}(\head{e_i})=a_i,
\end{equation*}
in virtue of \eqref{eq:lRkSprdNnt},
eliminate each other in the inner summation in \eqref{eq:sZgkZlLbkb}. 
If  $\{\tail{e_i}$, $\head{e_i}\}\subseteq\Bdn$,
then $e'_i$ does not influence the inner summation at all. As $G$ is ascending, the case $\tail{e_i}\in \Bup$ and $\head{e_i}\in \Bdn$ does not occur.  So \eqref{eq:sZgkZlLbkb} depends only on those $i$ for which
$\tail{e_i}\in\Bdn$ and $\head{e_i}\in \Bup$. However, by the definitions of $\dual G$,  $\Gamma$, $J$, $\Bup$, and $\Bdn$, these subscripts $i$ are exactly the members of $M$.
Thus, we can change $i\in L$ in \eqref{eq:sZgkZlLbkb} to $i\in L\cap M$.  
For such an $i$, only $\head{e_i}$ is in $\Bup$ and, by \eqref{eq:lRkSprdNnt}, only $a_i$ contributes to the inner summation in \eqref{eq:sZgkZlLbkb}. Therefore, we conclude that 
\begin{equation}
\sum_{v\in \Bup}
\esetcnt G{\vec a}{\set{e'_i:i\in L}}(v)
=\sum_{i\in L\cap M}a_i.
\label{eq:lKtKblrHznt}
\end{equation}
As $L$ and $R$ have played the same role so far, we also have that 
\begin{equation}
\sum_{v\in \Bup}
\esetcnt G{\vec a}{\set{e'_i:i\in R}}(v)
=\sum_{i\in R\cap M}a_i.
\label{eq:rjBkSmrTlfT}
\end{equation}
Therefore, combining \eqref{eq:wszBmnT}, \eqref{eq:lKtKblrHznt}, and  \eqref{eq:rjBkSmrTlfT}, 
we obtain that 
\begin{equation}
\sum_{i\in L\cap M}a_i = \sum_{i\in R\cap M}a_i .
\label{eq:mKkhkrkkTsK}
\end{equation}
For $i\in L$ and $j\in R$,
by the left-hand rule quoted in Footnote \ref{foot:lhrule},  
 $\head{\vdual{e'_i}}=F$ and $\tail{\vdual{e'_j}}=F$. So,  at the $\overset \ast =$ sign below, we can use \eqref{eq:lRkSprdNnt} and that $F$ is not the endpoint of any further edge of $\dual H$.
Using  \eqref{eq:hpKpkRsmnRh}, \eqref{eq:CDmMknRlh}, 
\eqref{eq:lKtKblrHznt}, and \eqref{eq:rjBkSmrTlfT}, too,  
\begin{align}
&C(F)=\sum_{i\in M}\edgecnt{\dual H}{\vec a}{\dual{e_i}}(F)\cr
&\overset{\ast}=\sum_{i\in L\cap M}\edgecnt{\dual H}{\vec a}{\dual{e_i}}(F) + \sum_{j\in R\cap M}\edgecnt{\dual H}{\vec a}{\dual{e_j}}(F)\cr
&=\sum_{i\in L\cap M}a_i + \sum_{j\in R\cap M}(-a_j).
\label{eq:mTksTmgVkG}
\end{align}
Combining \eqref{eq:btranspcnt},  \eqref{eq:CDmMknRlh}, 
 \eqref{eq:mTksTmgVkG}, and \eqref{eq:mKkhkrkkTsK},  $C(F)=0=D(F)$, as required.

Next, we deal with the case $F=\source{\dual H}$. So $F$ is the outer facet left to $H$; see Figure \ref{fig:ketto}. We modify the earlier argument as follows. 
Let $R:=\{i: e'_i$ is on the left boundary of $H\}$. In  Figure \ref{fig:ketto}, $R=\{1$, $4$, $13$, $10\}$. Now $\set{\vdual{e'_i}:i\in R}$ is the set of outgoing edges from $F$ in $\dual H$.  As $\set{e'_i:i\in R}$ is a maximal directed path in $H$, 
\begin{equation}
\esetcnt G{\vec a}{\set{e'_i:i\in R}}=
\btranspcnt G b .
\label{eq:wRgSjnTvF}
\end{equation}
Similarly to \eqref{eq:sZgkZlLbka}--\eqref{eq:sZgkZlLbkb}, we take the sum
\begin{equation}
\sum_{v\in \Bup}
\esetcnt G{\vec a}{\set{e'_i:i\in R}}(v)
=
\sum_{i\in R}\sum_{v\in \Bup}\edgecnt G{\vec a}{e'_i}(v) .
\label{eq:sszBjktGb}
\end{equation}
Like earlier,  the inner sum in \eqref{eq:sszBjktGb} is 0 unless $\tail {e_i}\in\Bdn$ and $\head{e_i}\in\Bup$, that is, unless $i\in M$. 
Thus, we can change the range of the outer sum in \eqref{eq:sszBjktGb} from $i \in R$  to $i\in R\cap M$; note that $R\cap M=\{4, 10\}$ in Figure \ref{fig:ketto}. For $i\in R\cap M$, the inner sum is $\edgecnt G{\vec a}{e'_i}(\head{e_i})=a_i$. Therefore, \eqref{eq:sszBjktGb} turns into
\begin{equation}
\sum_{v\in \Bup}
\esetcnt G{\vec a}{\set{e'_i:i\in R}}(v)
=\sum_{i\in R\cap M}a_i.
\label{eq:krGhpNbrZm}
\end{equation}
So \eqref{eq:krGhpNbrZm}, \eqref{eq:wRgSjnTvF},  \eqref{eq:btranspcnt},  $\sink G \in \Bup$, and $\source G \notin \Bup$ imply that 
\begin{equation}
\sum_{i\in R\cap M}a_i = \sum_{v\in \Bup} \btranspcnt G b (v) = b.
\label{eq:mKmkzSr}
\end{equation}  
Similarly to \eqref{eq:mTksTmgVkG}, but now there is no incoming edge into $F=\source{\dual H}$ and so ``the earlier $L$'' is $\emptyset$ and not needed, we have that 
\begin{align}
&C(F)=\sum_{i\in M}\edgecnt{\dual H}{\vec a}{\dual{e_i}}(F)\cr
&=\sum_{i\in R\cap M}\edgecnt{\dual H}{\vec a}{\dual{e_i}}(F) =\sum_{i\in R\cap M}(-a_i)=-\sum_{i\in R\cap M} a_i.
\label{eq:hMrvHlkG}
\end{align}
By \eqref{eq:hMrvHlkG} and \eqref{eq:mKmkzSr},  $C(F)=-b$. Since 
$D(F) =  \btranspcnt {\dual H}b(\source{\dual H})=-b$ by \eqref{eq:btranspcnt} and \eqref{eq:CDmMknRlh}, we obtain the required equality $C(F)=D(F)$.

The treatment for the remaining case  $F=\sink{\dual H}$ could be similar, but we present a shorter approach. 
By  \eqref{eq:btranspcnt},  \eqref{eq:CDmMknRlh}, and the dual of \eqref{eq:szdhKrjPb}, 
\begin{equation}
\sum_{F\in V(\dual H)}C(F)=0=\sum_{F\in V(\dual H)}D(F).
\label{eq:bZbfsmzjn}
\end{equation}
We already know that for each $F\in V(\dual H)$ except possibly for $F=\sink {\dual H}$, $C(F)$ on the left of  \eqref{eq:bZbfsmzjn} equals the corresponding summand $D(F)$ on the right.  This fact and  \eqref{eq:bZbfsmzjn} imply that   $C(\sink{\dual H}=D(\sink{\dual H})$, as required.

After settling all three cases, we have shown that $C$ and $D$ in \eqref{eq:CDmMknRlh} are the same. This proves that any solution $\vec a$ of $P$ is also a solution of $\dual P$.

To prove the converse, we need the following easy consequence of Platt \cite{platt}.

\begin{fact}[Platt \cite{platt}]\label{fact:Pldual} If $X$ is an upward bipolar plane graph, then its dual, $\dual X$, is isomorphic to an upward bipolar plain graph.
\end{fact}

We can extract Fact \ref{fact:Pldual} from Platt \cite{platt} as follows.
As earlier but now for each facet $F$ of $X$, pick a capital $c_F$ in the interior of $F$. For any two neighboring facets $F$ and $T$, connect $c_F$ and $c_T$ by a new arc through the common bordering arc of $F$ and $T$. The capitals and the new arcs form a plane graph $X'$ isomorphic to $\dual X$, in notation, $X'\cong \dual X$. As $X'$ is an upward bipolar plain graph by Definition \ref{def:upwBPG},  we obtain Fact \ref{fact:Pldual}.

Temporarily, we call the way to obtain $X'$ from $X$ above a \emph{prime construction}; the indefinite article is explained by the fact that the vertices and the arcs of $X'$ can be chosen in many ways in the plane. The \emph{transpose} $\transp X$ of a graph $X$ is obtained from $X$ by reversing all its edges.  For $e\in E(X)$, $\transp e$ stands for the  \emph{transpose} of $e$; note that  $\tail{\transp e}=\head e$, $\head{\transp e}=\tail e$, and $V(\transp X)=\{\transp e:i\in V(X)\}$.

Resuming the proof of Theorem \ref{thm:main}, Theorem \ref{thm:ascending} allows us to assume that $G$ and $H$ are ascending. Let $G'$ be a plane graph obtained from $G$ by a prime construction;  $G'$ is isomorphic to $\dual G$.  
In Figure \ref{fig:ketto}, only some vertices of $G'$ are indicated by red pentagons and only some of its arcs are drawn as segments of the thin dashed open Jordan curve, but the figure is still illustrative. 
To obtain a graph $G''$ isomorphic to $\ddual G$, we apply a prime construction to $G'$  so that the vertices of $G$ are the chosen capitals that form $V(G'')$  and, geometrically, the original arcs of $G$ are the chosen arcs of $G''$ connecting these capitals. By the left-hand rule quoted in Footnote \ref{foot:lhrule}, $G''$ is $\transp G$. Hence $\ddual G\cong\transp G$. Similarly, $\ddual H\cong\transp H$. 
Let us  define $\transp \phi\colon\set{1,\dots,n} \to E(\transp G)$ and 
$\transp \psi\colon\set{1,\dots,n} \to E(\transp H)$ in the natural way by $\transp \phi(i):=\transp{(\phi(i))}$ and 
$\transp \psi(i):=\transp{(\psi(i))}$. We claim that 
\begin{equation}
P\text{ and }\transp P:=\problem(\transp G,\transp H,\, \transp \phi,\transp \psi,\, \alg A, b)\text{ have the same solutions.}
\label{eq:stFbkZdhVsN}
\end{equation}
The reason is simple: to neutralize that the edges are reversed, a solution $\vec u$ of $P$ should be changed to $-\vec u$. However, the source and the sink are interchanged, and this results in a second change of the sign. So, a solution of $P$ is also a solution of $\transp P$. Similarly, a solution of $\transp P$ is a solution of $\dtransp P=P$, proving \eqref{eq:stFbkZdhVsN}.

Finally, let $\vec a$ be a solution of $\dual P$. Fact \ref{fact:Pldual} allows us to apply the already proven part of Theorem \ref{thm:main} to $\dual P$ instead of $P$, and we obtain that $\vec a$ is a solution of $\ddual P$. We have seen that $\ddual G\cong \transp G$ and $\ddual H\cong \transp H$. Apart from these isomorphisms,  $\ddual \phi$ and $\ddual \psi$  are $\transp \phi$ and $\transp \psi$, respectively. Thus,  $\ddual P$ and $\transp P$ have the same solutions.  Hence $\vec a$ is a solution of $\transp P$, and so \eqref{eq:stFbkZdhVsN} implies that $\vec a$ is a solution of $P$, completing the proof of Theorem \ref{thm:main}.
\end{proof}

\begin{remark}\label{rem:elMnrPln}
Apart from applying the result of Platt \cite{platt}, the proof above is self-contained. Even though Platt's result may seem intuitively clear, its rigorous proof is not easy at all. Since a trivial induction instead of relying on Platt \cite{platt} 
would suffice for the particular graphs occurring in the subsequent section, our aim to give an \emph{elementary proof} of Hutchinson's self-duality theorem is not in danger.
\end{remark}

\section{Hutchinson's self-duality theorem}\label{sect:hutch}
The paragraph on pages 272--273 in \cite{hutczg} gives a detailed account on the contribution of each of the two authors of \cite{hutczg}. 
In particular, the self-duality theorem, to be recalled soon, is due exclusively to George Hutchinson.  Thus, we call it \emph{Hutchinson's self-duality theorem}, and we reference Hutchinson \cite{hutczg} in connection with it. A similar strategy applies when citing his other exclusive results from  \cite{hutczg}.

The original proof of the self-duality theorem is deep. It relies on Hutchinson \cite{hutch}, which belongs mainly to the theory of abelian categories, on the fourteen-page-long  Section 2 of Hutchinson and Cz\'edli \cite{hutczg}, and on the nine-page-long Section 3 of Hutchinson \cite{hutczg}. A second proof given  by Cz\'edli and Tak\'ach \cite{czgtakach} avoids Hutchinson \cite{hutch} and abelian categories, but relying on the just-mentioned Sections 2 and 3, it is still complicated. No elementary proof of Hutchinson's self-duality theorem has previously been given; in light of Remark \ref{rem:elMnrPln}, we present such a proof here.

By a \emph{module} $M$ over a ring $R$ with $1$ we always mean a \emph{unital left module}, that is, $1m=m$ holds for all $m\in M$. The lattice of all submodules of $M$ is denoted by $\Sub M$. For  $X,Y\in\Sub M$, $X\leq Y$ and $X\wedge Y$ means $X\subseteq Y$ and $X\cap Y$, respectively, while $X\vee Y$ is the submodule generated by $X\cup Y$. A \emph{lattice term} is built from variables and the operation symbols $\vee$ and $\wedge$. For lattice terms $p$ and $q$, the string ``$p=q$'' is called a \emph{lattice identity}. For example, 
$x_1\wedge(x_2\vee x_3)=(x_1\wedge x_2)\vee (x_1\wedge x_3)$ is a lattice identity; in fact, 
it is one of the two (equivalent) distributive laws. To obtain the \emph{dual} of a lattice term, we interchange $\vee$ and $\wedge$ in it. For example, the dual of
\begin{align}
r &=\Bigl(x_1\vee \bigl(x_2\wedge  (x_3\vee x_4)\bigr ) \vee x_5\Bigr)
\wedge \Bigl(\bigl( (x_6\vee x_7)\wedge (x_8\vee x_9) \bigr)  \vee x_{10}\Bigr),
\label{eq:rxmpla} \\
\noalign{\noindent which will be needed in an example later,  is the lattice term}
\dual r &=\Bigl(x_1\wedge \bigl(x_2\vee  (x_3\wedge x_4)\bigr ) \wedge x_5\Bigr)
\vee \Bigl(\bigl( (x_6\wedge x_7)\vee (x_8\wedge x_9) \bigr)  \wedge x_{10}\Bigr).\label{eq:rxmplb}
\end{align}
The \emph{dual} of a lattice identity is obtained by dualizing the lattice terms on both sides of the equality sign. For example, the dual of the above-mentioned distributive law is $x_1\vee(x_2\wedge x_3)=(x_1\vee x_2)\wedge (x_1\vee x_3)$, the other distributive law.

Now we can state Hutchinson's self-duality theorem.

\begin{theorem}[Hutchinson {\cite[Theorem 7]{hutczg}}]\label{thm:hutch} 
Let $R$ be a ring with $1$, and let $\lambda$ be a lattice identity. Then $\lambda$ holds in $\Sub M$ for all unital modules $M$ over $R$ if and only if so does the dual of $\lambda$. 
\end{theorem}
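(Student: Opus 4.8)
The plan is to translate lattice terms and their duals into the graph-theoretic language of Sections \ref{sect:problem}--\ref{sect:thm}, so that the self-duality in Theorem \ref{thm:hutch} falls out of the graph-level duality of Theorem \ref{thm:main}. First I would assign to each lattice term $t$ in variables $x_1,\dots,x_k$ a finite upward bipolar plane graph $G_t$, built by the usual two-terminal series--parallel recursion: a single variable is one edge, a meet $p\wedge q$ is the series composition (identify $\sink{G_p}$ with $\source{G_q}$), and a join $p\vee q$ is the parallel composition (identify the two sources and the two sinks), each edge carrying the label of the variable occurrence it represents. The point of this encoding is a flow interpretation of term evaluation: for an $R$-module $M$, submodules $X_1,\dots,X_k\in\Sub M$, and $b\in M$, one checks by induction on $t$ that $b\in t(X_1,\dots,X_k)$ holds if and only if there is a conservative flow on $G_t$ carrying, on each edge, an element of the submodule labelling that edge, with net transport $b$ from $\source{G_t}$ to $\sink{G_t}$. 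Under this dictionary meet becomes series and join becomes parallel; since the plane dual interchanges series and parallel compositions (the minimal cuts of $G_t$ being the maximal paths of $\dual{G_t}$), a routine check gives $\dual{G_t}\cong G_{\dual t}$ up to the natural relabelling, matching the interchange of $\vee$ and $\wedge$ in the dual term.

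Next I would note that a lattice identity $p=q$ is the conjunction of the inclusions $p\le q$ and $q\le p$, and reduce to a single inclusion $p\le q$. After padding the two sides (using $x=x\wedge x$ and $x=x\vee x$, which leave the term value unchanged) so that each variable has the same number of occurrences on both sides, I fix a bijection pairing the occurrences, obtaining graphs $G:=G_q$ and $H:=G_p$ with the same edge number $n$ together with the bijections $\phi,\psi$ of Definition \ref{def:solset}. The central bridge lemma I would prove is that $p\le q$ holds in $\Sub M$ for all unital $R$-modules $M$ if and only if the associated \tpbg{} problem $\problem(G,H,\phi,\psi,\alg A,b)$ has the prescribed solution property, where $\alg A$ is the additive group of a suitable free $R$-module and $b$ a generic element. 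Concretely, a maximal directed path of $H$ singles out the meet-term (a conjunction of variables) of the disjunctive normal form of $p$ determined by that path, and the flow interpretation converts ``this meet-term lies below $q$'' into ``the corresponding edges of $G$ transport $b$'', which is exactly \eqref{eq:wWrsKHz}; testing the inclusion on a free module, where membership becomes a solvable linear flow system, is what makes the two conditions coincide.

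Finally I would observe that, by the correspondence $\dual{G_t}\cong G_{\dual t}$ and Definition \ref{def:dualproblem}, the dual problem $\dual P=\problem(\dual H,\dual G,\dual\psi,\dual\phi,\alg A,b)$ has flow network $\dual H\cong G_{\dual p}$ and control graph $\dual G\cong G_{\dual q}$, so under the same conventions it is precisely the problem attached to the inclusion $\dual q\le\dual p$ (dualization reverses the inclusion, which is absorbed by interchanging the two graphs). Theorem \ref{thm:main} then yields that $P$ and $\dual P$ have the same solutions, whence $p\le q$ holds in all submodule lattices over $R$ if and only if $\dual q\le\dual p$ does. Applying this to both inclusions $p\le q$ and $q\le p$ gives that $p=q$ and $\dual p=\dual q$ hold in exactly the same submodule lattices, which is Theorem \ref{thm:hutch}.

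I expect the bridge lemma of the second step to be the main obstacle. The flow interpretation of a single term and the graph-duality computation are essentially bookkeeping, but faithfully turning ``the inclusion holds in $\Sub M$ for every module $M$'' into a single combinatorial solution condition requires choosing the right universal (free) module and element $b$, and then matching the quantifier alternation (for all modules, for all submodule assignments, there exists a flow) against the fixed-tuple, all-paths formulation of a \tpbg{} solution. The delicate point is to make the padding and the edge bijection interact correctly with the generic-element argument, so that for each maximal path of $H$ the edges of $G$ usable for transporting $b$ are exactly the corresponding ones.
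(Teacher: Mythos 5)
Your overall architecture---encode repetition-free terms as two-terminal series--parallel plane graphs, prove a bridge lemma equating ``$p\le q$ holds in every $\Sub M$'' with solvability of a \tpbg{} problem, and then finish with Theorem \ref{thm:main} together with $\dual{G_t}\cong G_{\dual t}$---is exactly the paper's strategy (your reversed convention, meet $=$ series and join $=$ parallel with the flow network built from $q$, merely means you work with the paper's dual problem from the start, which Theorem \ref{thm:main} renders immaterial). The genuine gap is your reduction step. Padding with $x=x\wedge x$ and $x=x\vee x$ only equalizes the \emph{number} of occurrences of each variable on the two sides; it does not produce a \emph{$1$-balanced} identity in the paper's sense, where every variable occurs exactly once in $p$ and exactly once in $q$. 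Your bridge lemma is false for identities with repeated variables, no matter how you pair the occurrences, so the chain ``$\lambda$ valid $\iff P$ solvable $\iff \dual P$ solvable $\iff \dual\lambda$ valid'' breaks at its first link.

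Here is a concrete counterexample. Take the modular law $p\le q$ with $p=((x\wedge z)\vee y)\wedge z$ and $q=(x\wedge z)\vee(y\wedge z)$; it holds in every $\Sub M$ because submodule lattices are modular, and every variable already occurs equally often on both sides, so your method must handle it with no padding at all. In your convention, $G:=G_q$ consists of two parallel branches $s\to m_1\to t$ (edges $f_x$, $f_{z_a}$) and $s\to m_2\to t$ (edges $f_y$, $f_{z_b}$), while $H:=G_p$ has exactly two maximal directed paths, $(g_x,g_{z_1},g_{z_2})$ and $(g_y,g_{z_2})$. Pair $f_x\leftrightarrow g_x$, $f_y\leftrightarrow g_y$, and pair the two $z$-edges either way. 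With $f_{z_a}\leftrightarrow g_{z_1}$, $f_{z_b}\leftrightarrow g_{z_2}$ and capacities $a_x,a_y,a_1,a_2$, the first path uses $\set{f_x,f_{z_a},f_{z_b}}$ and condition \eqref{eq:wWrsKHz} at the vertex $m_2$ forces $a_2=0$, while the second path uses $\set{f_y,f_{z_b}}$ and the condition at $t$ forces $a_2=b$; with the opposite pairing the second path uses $\set{f_y,f_{z_a}}$, which leaves nonzero net content at $m_1$ and $m_2$. Hence for $b=1_R\neq 0_R$ the problem has no solution although the inclusion is valid (the same computation fails in the paper's convention too). What is missing is precisely the paper's first major step, \eqref{eq:dnsgslztt}: a variable occurring $u$ times in $p$ and $v$ times in $q$ must be replaced by $uv$ \emph{fresh} variables $w_{i,j}$, the $i$-th occurrence in $p$ by $\bigwedge_{j=1}^{v}w_{i,j}$ and the $j$-th occurrence in $q$ by $\bigvee_{i=1}^{u}w_{i,j}$; the equivalence of the new identity with $\lambda$ is proved by idempotence in one direction and by order-preservation through the middle term $\bigvee_{i}\bigwedge_{j}w_{i,j}$ in the other. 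Only after this genuine $1$-balancing does the bridge lemma (the paper's Lemma \ref{lemma:vZsv}) hold: its free-module argument attaches to each edge $e_i$ its own independent cyclic submodule $R\cdot(\head{e_i}-\tail{e_i})$, which is what allows a single capacity $a_i$ to be extracted per edge pair---impossible when one submodule must serve several edges at once, as your example above shows.
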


Even the following corollary of this theorem is interesting.
For $m\in \Nnul:=\set{0,1,2,\dots}$, let $\mathcal A_m$ be the class of Abelian groups\footnote{We note but do not need that the $\mathcal A_m$s are exactly the varieties of Abelian groups.}
 satisfying the identity $x+\dots+x=0$ with $m$ summands on the left. 
In particular, $\mathcal A_0$ is the class of all Abelian groups.

\begin{corollary}[Hutchinson \cite{hutczg}]\label{cor:hutch} For $m\in\Nnul$ and any lattice identity $\lambda$, $\lambda$ holds in the subgroup lattices $\Sub {\alg A}$ of all $\alg A\in\mathcal A_m$ if and only if so does the dual of $\lambda$. 
\end{corollary}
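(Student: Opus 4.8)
The plan is to observe that each class $\mathcal A_m$ is, after an obvious identification, nothing but a category of unital modules over a suitable ring, and then to apply Theorem~\ref{thm:hutch} verbatim. The linchpin is the elementary fact that for an Abelian group the notions of \emph{subgroup} and \emph{$\alg Z$-submodule} coincide: any subset closed under addition and negation is automatically closed under every integer multiple. Consequently the subgroup lattice $\Sub{\alg A}$ equals the submodule lattice of $\alg A$ regarded as a module, so that $\lambda$ holds in one exactly when it holds in the other.

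First I would dispose of the case $m=0$. Here $\mathcal A_0$ is the class of all Abelian groups, and taking $R:=\alg Z$ identifies $\mathcal A_0$ with the class of all unital $\alg Z$-modules: every Abelian group carries a unique $\alg Z$-module structure and conversely. By the remark above, the lattices $\set{\Sub{\alg A}:\alg A\in\mathcal A_0}$ are exactly the lattices $\set{\Sub M: M\text{ a unital }\alg Z\text{-module}}$, so Theorem~\ref{thm:hutch} with $R=\alg Z$ immediately gives the equivalence of $\lambda$ and its dual on this class.

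For $m\geq 1$ I would take $R:=\alg Z/m\alg Z$. A unital $R$-module is precisely an Abelian group $\alg A$ satisfying $x+\dots+x=0$ ($m$ summands), that is, a member of $\mathcal A_m$: given such an $\alg A$, letting the residue class $\bar k$ act as $k$-fold addition is well defined (since $mx=0$) and unital, and conversely any unital $R$-module satisfies $\bar m\cdot x=0$, hence $mx=0$. The $R$-action is moreover forced by the group structure, so $\mathcal A_m$ and the class of unital $R$-modules are the same class of Abelian groups, with the same subgroups and therefore the same subgroup/submodule lattices. Theorem~\ref{thm:hutch} applied to this $R$ then settles the case $m\geq 1$.

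I do not expect a genuine obstacle here; the whole content is the identification of $\mathcal A_m$ with a module category and the coincidence of subgroup and submodule lattices. The only point deserving a word of care is checking that no lattice is gained or lost in passing between groups and modules --- precisely the closure-under-integer-multiples observation above --- after which the corollary is a direct specialization of Theorem~\ref{thm:hutch}.
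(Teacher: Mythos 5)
Your proposal is correct and is essentially the paper's own proof: the paper derives the corollary in one sentence by treating each $\alg A\in\mathcal A_m$ as a unital module over the residue-class ring $\mathbb Z_m$ and invoking Theorem \ref{thm:hutch}, which is exactly your identification (your case split into $m=0$ and $m\geq 1$ is absorbed by the convention $\mathbb Z_0=\mathbb Z$). The details you supply --- uniqueness of the $\mathbb Z_m$-action and the coincidence of subgroups with submodules --- are the same routine verifications the paper leaves implicit in the word ``trivially''.
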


By treating each $\alg A\in\mathcal A_m$ as a left unital module over the residue-class ring $\mathbb Z_m$ in the natural way,  Corollary \ref{cor:hutch} follows trivially from Theorem \ref{thm:hutch}. 

In the rest of the paper, we derive Theorem \ref{thm:hutch} from Theorem \ref{thm:main}.

\begin{proof}[Proof of Theorem \ref{thm:hutch}]
We can assume that $\lambda$ is of the form $p\leq q$ where $p$ and $q$ are lattice terms. Indeed, any identity of the form $p=q$ is equivalent to the conjunction of $p\leq q$ and $q\leq p$. Thus, from now on, by a \emph{lattice identity} we mean a universally quantified inequality of the form 
\begin{equation}
\lambda: \quad 
(\forall x_1)\dots(\forall x_k)\Bigl( p(x_1,\dots,x_k)\leq q(x_1,\dots, x_k) \Bigr).
\label{eq:pleqqform}
\end{equation}

The dual of $\lambda$, denoted by $\dual \lambda$, is $\dual q\leq \dual p$, where $\dual p$ and $\dual q$ are the duals of the terms $p$ and $q$, respectively. Let us call $\lambda$ in \eqref{eq:pleqqform} a \emph{$1$-balanced identity} if every variable that occurs in the identity occurs exactly once in $p$ and exactly once in $q$. 
For lattice identities $\lambda_1$ and $\lambda_2$, we say that $\lambda_1$ and $\lambda_2$ are \emph{equivalent} if for every lattice $L$, $\lambda_1$ holds in $L$ if and only if so does $\lambda_2$.
As the first major step in the proof, we show that
for each lattice identity $p\leq q$, 
\begin{equation}
p\leq q\text{ is equivalent to a 1-balanced lattice identity } p'\leq q'.
\label{eq:dnsgslztt}
\end{equation}
To prove \eqref{eq:dnsgslztt}, 
observe that the absorption law $y=y\vee (y\wedge x)$ allows us to assume that every variable occurring in $p\leq q$ occurs both in $p$ and $q$. 
Indeed, if $x_i$ occurs, say, only in $p$, then we can change $q$ to $q\vee (q\wedge x_i)$. Let $B$ be the set\footnote{\eqref{eq:pleqqform} allows variables only from $\set{x_i:i\in\Nplu}$, so $B$ is a set. As usual,  $\Nplu=\{1,2,3,\dots\}$.} of those lattice  identities $\lambda$ in \eqref{eq:pleqqform}  
for which \eqref{eq:dnsgslztt} fails but the set of variables occurring in $p$ is the same as the set of variables occurring in $q$.  We need to show that $B=\emptyset$. Suppose the contrary.
For an identity $\lambda: p\leq q$ belonging to $B$, let $\beta(\lambda)$ be the number of those variables that occur at least three times in $\lambda$ (that is, more than once in $p$ or $q$). The notation $\beta$ comes from ``badness''. Pick a member $\lambda: p\leq q$ of $B$ that minimizes $\beta(\lambda)$. As $\lambda\in B$, we know that $\beta(\lambda)>0$. 
Let $x_1,\dots, x_k$ be the set of variables of $\lambda$. As $\beta(\lambda)$ remains the same when we permute the variables, we can assume that $x_1$ occurs in $\lambda$  at least three times. Let $u$ and $v$ denote the number of occurrences of $x_1$ in $p$ and that in $q$, respectively; note that $u,v\in\Nplu:=\set{1,2,3,\dots}$ and $u+v=\beta(\lambda)\geq 3$. Clearly, there is a $(u+k-1)$-ary term $\ovl p(y_1$, \dots, $y_u$, $x_2$, \dots, $x_k)$ such that each of $y_1$, \dots, $y_u$ occurs in $\ovl p$ exactly once and $p(x_1,\dots, x_k)$ is of the form
\begin{equation*}
p(x_1,x_2,\dots,x_k)=\ovl p(x_1,\dots, x_1,x_2,\dots, x_k)=\ovl p(x_1,\dots, x_1,\vvec x)
\end{equation*}
where $x_1$ is listed $u$ times in $\ovl p$ and $\vvec x=(x_2,\dots,x_k)$. For example, if 
\begin{equation*}
p(x_1,\dots,x_4)=\bigl((x_1\vee x_2)\wedge (x_1\vee x_3)\bigr) \wedge
\bigl((x_2\vee x_4)\wedge (x_1\vee x_3)\bigr),
\end{equation*}
then we can let
\begin{equation*}
\ovl p(y_1,y_2,y_3,x_2,x_3,x_4):=\bigl((y_1\vee x_2)\wedge (y_2\vee x_3)\bigr) \wedge
\bigl((x_2\vee x_4)\wedge (y_3\vee x_3)\bigr).
\end{equation*}
Similarly, there is an $(v+k-1)$-ary term $\ovl q(z_1$, \dots, $z_v$, $x_2$, \dots, $x_k)$ such that each of $z_1$, \dots, $z_v$ occurs in $\ovl q$ exactly once and $q(x_1,\dots, x_k)$ is of the form
\begin{equation*}
q(x_1,x_2,\dots,x_k)=\ovl q(x_1,\dots, x_1,x_2,\dots, x_k) =\ovl q(x_1,\dots, x_1,\vvec x)
\end{equation*}
where $x_1$ is listed $v$ times in $\ovl q$ and $\vvec x$ is still $(x_2,\dots, x_k)$
Consider the $u$-by-$v$ matrix $W=(w_{i,j})_{u\times v}$ of new variables; it has $u$ rows and $v$ columns.
Let
\[\vec w:=(w_{1,1}, w_{1,2},\dots,w_{1,v}, \,
w_{2,1}, w_{2,2},\dots,w_{2,v},\,\dots,\,
w_{u,1}, w_{u,2},\dots,w_{u,v})  
\]
be the vector of variables formed from the elements of $W$. That is, to obtain $\vec w$, we have listed the entries of $W$ row-wise.
We define the $(uv+k-1)$-ary terms
\begin{align*}
p^\ast(\vec w,\vvec x) &:=\ovl p(\bigwedge_{j=1}^v w_{1,j},\, \dots, \bigwedge_{j=1}^v w_{u,j},\,\vvec x) \text{ and}\cr
q^\ast(\vec w,\vvec x) &:=\ovl p(\bigvee_{i=1}^u w_{i,1},\, \dots, \bigvee_{i=1}^u w_{i,v},\,\vvec x),
\end{align*}
and we let $\lambda^\ast: p^\ast(\vec w,\vvec x)\leq q^\ast(\vec w,\vvec x)$. As each of the $w_{i,j}$s occurs in each of $p^\ast$ and $q^\ast$ exactly once and the numbers of occurrences of $x_2,\dots, x_k$ did not change, $\beta(\lambda^\ast)=\beta(\lambda)-1$. So, by the choice of $\lambda$,  we know that $\lambda^\ast$ is outside $B$. Thus, $\lambda^\ast$ is equivalent to a 1-balanced lattice identity. 

Next, we prove that $\lambda^\ast$ is equivalent to $\lambda$. 
Assume that $\lambda^\ast$ holds in a lattice $L$. Letting all the $w_{i,j}$s equal $x_1$ and using the fact that the join and the meet are idempotent operations, it follows immediately that $\lambda$ also holds in $L$. 
Conversely, assume that $\lambda$ holds in $L$, and let the $w_{i,j}$s and $x_2,\dots,x_k$ denote arbitrary elements of $L$. Since the lattice terms and operations are order-preserving, we obtain that 
\allowdisplaybreaks{
\begin{align*}
p^\ast(\vec w,\vvec x)&=\ovl p(\bigwedge_{j=1}^v w_{1,j},\, \dots, \bigwedge_{j=1}^v w_{u,j},\,\vvec x)
\cr
&\leq
\ovl p(\bigvee_{i=1}^u  \bigwedge_{j=1}^v w_{i,j},\, \dots, \bigvee_{i=1}^u  \bigwedge_{j=1}^v w_{i,j},\,\vvec x)=
p(\bigvee_{i=1}^u  \bigwedge_{j=1}^v w_{i,j},\,\vvec x) \cr
&\leq q(\bigvee_{i=1}^u  \bigwedge_{j=1}^v w_{i,j},\,\vvec x)
=\ovl q(\bigvee_{i=1}^u  \bigwedge_{j=1}^v w_{i,j},\, \dots, \bigvee_{i=1}^u  \bigwedge_{j=1}^v w_{i,j},\,\vvec x)
\cr
&\leq  
\ovl q(\bigvee_{i=1}^u   w_{i,1},\, \dots, \bigvee_{i=1}^u  w_{i,v},\,\vvec x)=q^\ast(\vec w,\vvec x),
\end{align*}
}%
showing that $\lambda^\ast$ holds in $L$. 
So $\lambda$ is equivalent to $\lambda^\ast$. Hence, $\lambda$ is equivalent to a $1$-balanced identity, since so is $\lambda^\ast$.
This contradicts that  $\lambda \in B$ and proves \eqref{eq:dnsgslztt}.

Clearly, if $\lambda$ is equivalent to a 1-balanced lattice identity $\lambda^\ast$, then the dual of $\lambda$ is equivalent to the dual of $\lambda^\ast$, which is again a 1-balanced identity. Thus, it suffices to prove Theorem \ref{thm:hutch} only for 1-balanced identities. So,  in the rest of the paper,
\begin{equation}
\lambda: p(x_1,\dots,x_n)\leq q(x_1,\dots,x_n)\,\text{ (in short, }p\leq q\text{)\, is a }1\text{-balanced}
\label{eq:whslmdbh}
\end{equation}
lattice identity.

\begin{figure}[ht] 
\centerline{ \includegraphics[width=\figwidthcoeff\textwidth]{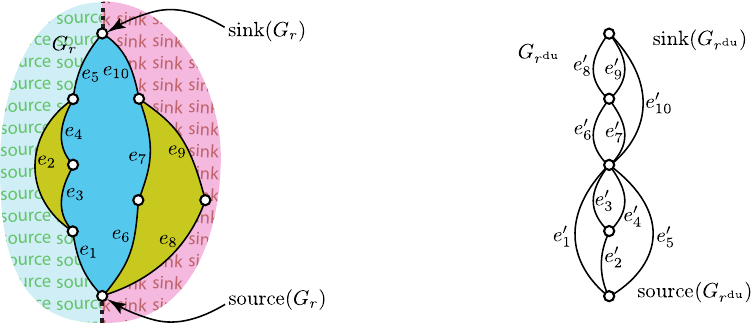}}
 \caption{For $r$ and $\dual r$ given in \eqref{eq:rxmpla} and \eqref{eq:rxmplb}, 
$G_r$ and its facets on the left,  and $G_{\dual r}$ on the right}
\label{fig:harom}
\end{figure}

For a lattice term $r$, $\Var r$ will stand for the \emph{set of variables} occurring in $r$.  We say that $r$ is \emph{repetition-free} if each of its variables occurs in $r$ only once, that is, if  $r\leq r$ is $1$-balanced. 
With the lattice terms given \eqref{eq:rxmpla} and \eqref{eq:rxmplb}, the following definition is illustrated by Figure \ref{fig:harom}.

\begin{definition}\label{def:sChlTrmk} 
 With each repetition-free lattice term $r$, we are going to associate an upward bipolar ascending plane graph $G_r$ up to isomorphism and a bijection $\xi_r\colon \Var r\to E(G_r)$ by induction as follows.
If $r$ is a variable, then $G_r$ is the two-element upward bipolar plane graph with a single directed edge, and $\xi_r$ is the only possible bijection from the singleton $\Var r$ to the singleton $E(G_r)$. 
For $r=r_1\vee r_2$, we obtain $G_r$ by putting $G_{r_2}$ atop ${G_{r_1}}$ and identifying (in other words, gluing together) $\sink {G_{r_1}}$ and $\source {G_{r_2}}$. Then $\source{G_r}=\source {G_{r_1}}$ and $\sink{G_r}=\sink{G_{r_2}}$.
For $r=r_1\wedge r_2$, we obtain  $G_r$ by bending or deforming, resizing, and moving  ${G_{r_1}}$ and ${G_{r_2}}$ so that
$\source {G_{r_1}}=\source {G_{r_2}}$, $\sink {G_{r_1}}=\sink {G_{r_2}}$, and the rest of ${G_{r_1}}$ is on the left of the rest of ${G_{r_2}}$. Then $\source{G_r}=\source {G_{r_1}}=\source {G_{r_2}}$ and $\sink{G_r}=\sink {G_{r_1}}=\sink {G_{r_2}}$. If $r=r_1\vee r_2$ or $r=r_1\wedge r_2$, then let $\xi_r:=\xi_{r_1}\cup \xi_{r_2}$, that is, for $i\in\set{1,2}$ and $x\in\Var{r_i}$,  $\xi_r(x):=\xi_{r_i}(x)$. 
\end{definition}
In the aspect of $G_r$, the lattice operations are associative but not commutative. A straightforward induction yields that for every lattice term $r$,
\begin{align}
&G_{\dual r}\text{ is isomorphic to }\dual {G_r}:=\dual {(G_r)}\text{, and} 
\label{eq:fcgdkfDla}\\
&\xi_{\dual r}(x)=\dual{\xi_r(x)}  \text{ for all } x\in\Var{\dual r};
\label{eq:fcgdkfDlb}
\end{align}
\eqref{eq:fcgdkfDla} and \eqref{eq:fcgdkfDlb} are exemplified by Figure \ref{fig:harom}, where $\dual{G_r}$ is given by facets.

The ring $R$ with 1 in the proof is fixed, and $(R,+)$ is its additive group. 
For  $p$ in \eqref{eq:whslmdbh}, 
we denote by  $\SSC(p,R)$ the \ul set of \ul systems of \ul contents of $G_p$ with respect to  $(R,+)$. That is, complying with the terminology of
Definition \ref{def:solset}\ref{def:solsetb},
\begin{equation}
\SSC(p,R)\text{ is }R^{V(G_p)},\text{ the set of all maps from }V(G_p)\text{ to }R.
\label{eq:SCntdf}
\end{equation}  
For a unital module $M$ over $R$ (an $R$-module $M$ for short), similarly to \eqref{eq:SCntdf}, let
\begin{equation*}
\SSC(p,M):=\Sub M^{V(G_p)},\text{ the set all } V(G_p)\to  M \text{ maps.}
\label{eq:modSCntd}
\end{equation*} 
Interrupting the proof of Theorem \ref{thm:hutch}, we formulate and prove two lemmas.

\begin{lemma}\label{lemma:grflm} For submodules $B_1$, \dots, $B_n$  and  elements $u,v$ of an $R$-module $M$, $v-u\in p(B_1,\dots,B_n)$ if and only if there exists an $S\in \SSC(p,M)$ such that 
\begin{equation}
S(\source{G_p})=u,\text{ }S(\sink{G_p})=v, \text{ and }
S(\head {e_i}) - S(\tail{e_i})\in B_i
\label{eq:mKgrMkpl}
\end{equation}
for  all edge $e_i\in E(G_p)$. The same holds with $q$ and $e'_i$ instead of $p$ and $e_i$, respectively.
\end{lemma}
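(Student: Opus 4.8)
The plan is to prove the equivalence by induction on the structure of the repetition-free term $p$, following the recursion in Definition \ref{def:sChlTrmk}; the assertion for $q$ is identical. Throughout I write $P_j$ for the submodule of $M$ obtained by evaluating a subterm $r_j$ at the submodules $B_i$ indexed by the variables of $r_j$, and I use that in $\Sub M$ the join is the submodule sum $+$ while the meet is the intersection $\cap$; recall that a member of $\SSC(p,M)$ is simply a map $S\colon V(G_p)\to M$. In the base case $p=x_i$, the graph $G_p$ is the single edge $e_i$ from $\source{G_p}$ to $\sink{G_p}$ and $p(B_1,\dots,B_n)=B_i$; a system $S$ is determined by $u=S(\source{G_p})$ and $v=S(\sink{G_p})$, and its only edge condition reads $S(\head{e_i})-S(\tail{e_i})=v-u\in B_i$, which is verbatim $v-u\in p(B_1,\dots,B_n)$.

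Next, the join case $p=r_1\vee r_2$. Here $G_p$ is the series composition that glues $\sink{G_{r_1}}$ to $\source{G_{r_2}}$, so $\source{G_p}=\source{G_{r_1}}$, $\sink{G_p}=\sink{G_{r_2}}$, the interiors are disjoint, and the unique shared vertex is $\sink{G_{r_1}}=\source{G_{r_2}}$. Since $p$ is repetition-free, the variable sets of $r_1$ and $r_2$ are disjoint, whence $p(B_1,\dots,B_n)=P_1+P_2$. The algebraic fact I would use is that $v-u\in P_1+P_2$ if and only if there is an intermediate $w\in M$ with $w-u\in P_1$ and $v-w\in P_2$: forward, split $v-u=d_1+d_2$ and set $w=u+d_1$; backward, add the two memberships. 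Applying the induction hypothesis to $r_1$ with boundary values $u,w$ and to $r_2$ with boundary values $w,v$ produces systems $S_1,S_2$ that agree at the shared vertex (both give the value $w$), hence glue to a single $S\in\SSC(p,M)$ with the required boundary values and edge conditions. Conversely, any admissible $S$ on $G_p$ restricts to admissible $S_1,S_2$, and $w:=S(\sink{G_{r_1}})$ recovers the decomposition.

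Then the meet case $p=r_1\wedge r_2$. Now $G_p$ is the parallel composition identifying the two sources and the two sinks, so $\source{G_p}=\source{G_{r_1}}=\source{G_{r_2}}$ and $\sink{G_p}=\sink{G_{r_1}}=\sink{G_{r_2}}$, with disjoint interiors, and $p(B_1,\dots,B_n)=P_1\cap P_2$. Here $v-u\in P_1\cap P_2$ iff $v-u\in P_1$ and $v-u\in P_2$, and the induction hypothesis applied to $r_1$ and to $r_2$, each with the same boundary values $u$ and $v$, gives $S_1,S_2$ that already coincide at the two shared vertices; gluing yields the desired $S$, and the converse again follows by restriction.

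I expect the only genuinely delicate point to be the consistency of the gluing: one must check that $V(G_{r_1})\cap V(G_{r_2})$ is exactly the shared boundary and that the induction-hypothesis boundary constraints force $S_1$ and $S_2$ to coincide there. This is automatic in the meet case, where both systems are pinned to $u$ at the source and $v$ at the sink, and is engineered in the join case precisely by the choice of the intermediate element $w$. The hypothesis that $p$ is repetition-free enters exactly once, to guarantee that $\Var{r_1}$ and $\Var{r_2}$ are disjoint, so that $p(B_1,\dots,B_n)$ factors as $P_1+P_2$ or $P_1\cap P_2$ with the family $(B_i)$ partitioned correctly between the two subgraphs.
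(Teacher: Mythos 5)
Your proof is correct and takes essentially the same approach as the paper: the paper also argues by induction on the structure (length) of $p$, with the join case resting on exactly your key fact that $v-u\in B\vee B'$ if and only if there is a $w$ with $w-u\in B$ and $v-w\in B'$. The paper merely states this fact and declares the induction ``trivial,'' whereas you spell out the series/parallel gluing details it omits.
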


Letting $u:=0$, the lemma describes the containment $v\in p(B_1,\dots,B_n)$. However, now that the lemma is formulated with $v-u$, it will be easier to apply it later. 
Based on the rule that for $B,B'\in \Sub M$, $B\vee B'=\set{h+h': h\in B,\,h'\in B'}$, we have that  $v-u\in B\vee B'$ if and only if there is a $w$ such that $w-u\in B$ and $v-w\in B'$. (For the ``only if'' part: $w:=u+h=v-h'$.) Hence, 
 the lemma follows by a trivial induction on the length of $p$; the details are omitted. 
Alternatively (but with more work), one can derive the lemma from the congruence-permutable particular case of  Cz\'edli \cite[Claim 1]{czgpuebla},  \cite[Proposition 3.1]{czgCPhorn},  \cite[Lemma 3.3]{czghorn} or  Cz\'edli and Day \cite[Proposition 3.1]{czgday} together with the canonical isomorphism between $\Sub M$ and the congruence lattice of $M$. The following lemma, in which $\xi_p$ and $\xi_q$ are defined in Definition \ref{def:sChlTrmk}, is crucial and less obvious.

\begin{lemma}\label{lemma:vZsv} Let $R$  be a ring with $1=1_R$ and let $\lambda: p\leq q$ be a $1$-balanced lattice identity as in  \eqref{eq:whslmdbh}. Then the following two conditions are equivalent.
\begin{enumerate}[label=\upshape($\alpha$\arabic*)] 
\item\label{lemma:vZsva} For every (unital left) $R$-module $M$, $p\leq q$ holds in $\Sub M$.
\item\label{lemma:vZsvb}
$\problem(G_p,G_q$, $\xi_p,\xi_q$, $(R,+), 1_R)$ has a solution. 
\end{enumerate}
\end{lemma}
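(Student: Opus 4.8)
The plan is to read both conditions through the flow characterization of Lemma \ref{lemma:grflm} and to pass between them using a telescoping identity along the maximal directed paths of $G_q$. First I would unwind \ref{lemma:vZsvb}. Writing $e_i:=\xi_p(x_i)$ and $e'_i:=\xi_q(x_i)$, a solution $\vec a=(a_1,\dots,a_n)\in R^n$ is, by \eqref{eq:wWrsKHz}, \eqref{eq:hpKpkRsmnRh}, \eqref{eq:lRkSprdNnt} and \eqref{eq:btranspcnt}, exactly a family of scalars such that for every maximal directed path $P$ of $G_q$, with edge-index set $S_P:=\set{i: e'_i\in P}$, the numbers $(a_i)_{i\in S_P}$ obey flow conservation in $G_p$ (sum of incoming equals sum of outgoing at each internal vertex) with net $-1_R$ at $\source{G_p}$ and $+1_R$ at $\sink{G_p}$.

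The bridge between \ref{lemma:vZsva} and \ref{lemma:vZsvb} is one computation. For any content system $S\in\SSC(p,M)$, set $d_i:=S(\head{e_i})-S(\tail{e_i})$. If $\vec a$ is a solution, then for each maximal path $P$ of $G_q$, regrouping by vertices of $G_p$ gives
\begin{equation*}
\sum_{i\in S_P}a_i d_i=\sum_{v\in V(G_p)}\esetcnt{G_p}{\vec a}{\set{e'_i:i\in S_P}}(v)\cdot S(v)=\sum_{v\in V(G_p)}\btranspcnt{G_p}{1_R}(v)\cdot S(v)=S(\sink{G_p})-S(\source{G_p}),
\end{equation*}
so this sum is independent of $P$ (keeping scalars on the left throughout, as $R$ need not be commutative).

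For \ref{lemma:vZsvb}$\Rightarrow$\ref{lemma:vZsva}, take $v-u\in p(B_1,\dots,B_n)$ and use Lemma \ref{lemma:grflm} to get $S\in\SSC(p,M)$ with $S(\source{G_p})=u$, $S(\sink{G_p})=v$ and $d_i\in B_i$. I would define $S'\colon V(G_q)\to M$ by $S'(w):=u+\sum_{i\in Q}a_i d_i$, where $Q$ is any directed path from $\source{G_q}$ to $w$; well-definedness follows by extending two such paths by one common path from $w$ to $\sink{G_q}$ (every vertex of a bipolar graph lies on a maximal path) and invoking the path-independence just proved. Since $a_i d_i\in B_i$, the edge-differences of $S'$ lie in the $B_i$, while $S'(\source{G_q})=u$ and $S'(\sink{G_q})=v$; hence Lemma \ref{lemma:grflm} applied to $q$ gives $v-u\in q(B_1,\dots,B_n)$, i.e.\ $p\leq q$ holds in every $\Sub M$. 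For the converse \ref{lemma:vZsva}$\Rightarrow$\ref{lemma:vZsvb}, I would manufacture the generic $p$-flow: let $M$ be the free left $R$-module on $V(G_p)$ with basis $(\gamma_w)$, put $u:=\gamma_{\source{G_p}}$, $v:=\gamma_{\sink{G_p}}$, $d_i:=\gamma_{\head{e_i}}-\gamma_{\tail{e_i}}$ and $B_i:=Rd_i$. The map $w\mapsto\gamma_w$ witnesses $v-u\in p(B_1,\dots,B_n)$ via Lemma \ref{lemma:grflm}, so \ref{lemma:vZsva} yields $v-u\in q(B_1,\dots,B_n)$, and Lemma \ref{lemma:grflm} returns $S'\in\SSC(q,M)$ with $S'(\head{e'_i})-S'(\tail{e'_i})\in B_i=Rd_i$. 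As $B_i$ is \emph{cyclic}, I can pick $a_i\in R$ with $S'(\head{e'_i})-S'(\tail{e'_i})=a_i d_i$; telescoping $S'$ along any maximal path $P$ gives $\sum_{i\in S_P}a_i d_i=v-u=\gamma_{\sink{G_p}}-\gamma_{\source{G_p}}$, and comparing coefficients of the free basis $(\gamma_w)$ reproduces exactly the conservation equations of the first paragraph, i.e.\ $\esetcnt{G_p}{\vec a}{\set{e'_i:i\in S_P}}=\btranspcnt{G_p}{1_R}$, so $\vec a$ is a solution.

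The main obstacle is the interface between the two graphs: the index sets $S_P$ are dictated by paths in $G_q$, whereas the conservation constraints live on the unrelated graph $G_p$. What makes the argument succeed is that the scalars $a_i$ are \emph{global} (one per variable, shared by all paths) while the constraints are \emph{local to each path}; the path-independence computation of the second paragraph is precisely what reconciles these, and the freeness of $M$ together with the cyclicity of $B_i=Rd_i$ is what lets the single content system $S'$ on $G_q$ be decoded into one scalar $a_i$ per edge. Beyond this, the only points demanding care are the sign conventions of \eqref{eq:btranspcnt} and \eqref{eq:lRkSprdNnt} and keeping all scalar multiplications on the left.
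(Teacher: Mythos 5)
Your proposal is correct and follows essentially the same route as the paper's own proof: the $(\alpha 1)\Rightarrow(\alpha 2)$ direction via the free module on $V(G_p)$ with cyclic submodules $B_i=R\cdot(\head{e_i}-\tail{e_i})$, extraction of the $a_i$ from Lemma \ref{lemma:grflm}, telescoping along maximal paths and comparing free-basis coefficients; and the $(\alpha 2)\Rightarrow(\alpha 1)$ direction via the path-sum content system $S'(w)=u+\sum_{i\in Q}a_i d_i$ on $G_q$ (the paper's $T$). The only cosmetic difference is that you prove path-independence of $\sum_{i}a_i d_i$ directly by the common-extension-and-subtract trick, where the paper instead cites the implication \eqref{eq:dpsTlsmHd}$\Rightarrow$\eqref{eq:wszBmnT} established inside the proof of Theorem \ref{thm:main} --- but that is the same argument.
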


\begin{proof}[Proof of Lemma \ref{lemma:vZsv}]
For $i\in\set{1,\dots,n}$, we denote $\xi_p(x_i)$ and $\xi_q(x_i)$ by $e_i$ and $e'_i$, respectively. 

Assume that \alphref{lemma:vZsva} holds.
Let $F$ be  the free unital $R$-module\footnote{We note but do not use the facts that $(R,+)$ can be treated an $R$-module denoted by $_RR$, and  $F$ that we are defining is the $|V(G_p)|$th direct power of $_RR$.} generated by  $V(G_p)$.
For each $e_i\in E(G_p)$, let $B_i\in \Sub F$ be the submodule generated by $\head{e_i}-\tail{e_i}$. In other words, $B_i=R\cdot(\head{e_i}-\tail{e_i}):=\set{r\cdot (\head{e_i}-\tail{e_i}):r\in R}$. 
Taking  $\Sid\in\SSC(p,F)$  defined by $\Sid(v):=v$ (like an identity map) for $v\in V(G_p)$, Lemma \ref{lemma:grflm} implies that $\sink{G_p}-\source{G_p}\in p(B_1,\dots, B_n)$.  So, as we have assumed \alphref{lemma:vZsva},  $\sink{G_p}-\source{G_p}\in q(B_1,\dots, B_n)$. Therefore, Lemma \ref{lemma:grflm} yields a system $T\in \SSC(q,F)$ of contents such that 
$T(\source{G_q})=\source{G_p}$, $T(\sink{G_q})=\sink{G_p}$, and for every $i\in\set{1,\dots,n}$, 
$
T(\head{e'_i})-T(\tail{e'_i}) \in B_i = R\cdot(\head{e_i}-\tail{e_i})
$.
Thus,  for each $i\in\set{1,\dots,n}$,  we can pick an $a_i\in R$ such that 
\begin{equation}
T(\head{e'_i})-T(\tail{e'_i}) = a_i\cdot (\head{e_i}-\tail{e_i}).
\label{eq:mRghmKsnGhn}
\end{equation}
Let $P$ stand for the \tpbg{} problem occurring in \alphref{lemma:vZsvb}.
With the $a_i$s in \eqref{eq:mRghmKsnGhn}, let $\vec a:=(a_1,\dots,a_n)$. We claim that $\vec a$ is a solution of $P$. To show this, let $\vvec e:=(e'_{j_1}$, \dots, $e'_{j_k})$ be a maximal directed path in $G_q$.
Let us compute, using the equality $\head{e'_{j_i}}=\tail{e'_{j_{i+1}}}$ for $i\in\set{1,\dots, k-1}$ at  $\overset{\ast\ast}=$  and \eqref{eq:mRghmKsnGhn} at $\overset\oplus=$:
\begin{align}
&\sink{G_p}-\source{G_p}=T(\sink{G_q})- T(\source{G_q}) \label{eq:wszVzhKlja}\\
&=  T(\head{e'_{j_k}})-T(\tail{e'_{j_1}})\overset{\ast\ast}=
\sum_{i=1}^k \bigl(T(\head{e'_{j_i}})-T(\tail{e'_{j_i}})\bigr)\cr
&\overset\oplus=\sum_{i=1}^k \bigl(a_{j_i}\cdot \head{e_{j_i}}-a_{j_i}\cdot \tail{e_{j_i}}\bigr).
\label{eq:wszVzhKljb}
\end{align}
For  $v\in V(G_p)$, define $I_v:=\{i: \tail{e_{j_i}}=v$ and $1\leq i\leq k\}$ and $J_v:=\{i: \head{e_{j_i}}=v$ and $1\leq i\leq k\}$. 
Expressing \eqref{eq:wszVzhKljb} as a linear combination of the free generators of $F$ with coefficients taken from $R$,  the coefficient of $v$ is  
$
\sum_{i\in J_v} a_{j_i} - \sum_{i\in I_v} a_{j_i}
$. Hence, it follows from \eqref{eq:lRkSprdNnt} and \eqref{eq:hpKpkRsmnRh} that 
\begin{align} 
\sum_{i\in J_v} a_{j_i} &- \sum_{i\in I_v} a_{j_i} 
=  
\sum_{i\in J_v} \edgecnt{G_p}{\vec a}{e'_{j_i}}(v)
- \sum_{i\in I_v} \edgecnt{G_p}{\vec a}{e'_{j_i}}(v)
\cr
&=\sum_{i\in\set{1,\dots,k}} \edgecnt{G_p}{\vec a}{e'_{j_i}}(v)
=
\esetcnt {G_p}{\vec a}{\set{e'_{j_1},\dots,e'_{j_k}}}(v)
\label{eq:smRznhtNGd}
\end{align}
is the coefficient of $v$ in  \eqref{eq:wszVzhKljb} and, by \eqref{eq:wszVzhKlja}, also in the linear combination expressing $\sink{G_p}-\source{G_p}$.
On the other hand, the coefficients of $\source {G_p}$, $\sink {G_p}$, and $v\in V(G_p)\setminus\{ \source {G_p}$, $\sink{G_p} \}$ in 
the straightforward linear combination expressing 
 $\sink{G_p}-\source{G_p}$ are  $-1_R$, $1_R$, and $0_R$, respectively. Since $F$ is freely generated by $V(G_p)$, this linear combination is unique. Therefore,  \eqref{eq:smRznhtNGd} is  $-1_R$, $1_R$, and $0_R$ for  $v=\source {G_p}$, $v=\sink {G_p}$, and $v\in V(G_p)\setminus\{ \source {G_p}$, $\sink{G_p} \}$, respectively. 
Thus, the function applied on the right of \eqref{eq:smRznhtNGd} to $v$ is the same as $\btranspcnt G {1_{\kern-1 pt R}}$ defined in \eqref{eq:btranspcnt}. As this holds for all $v\in V(G_p)$, the just-mentioned function equals $\btranspcnt G {1_{\kern-1 pt R}}$.  
Hence,  $\vec a$ is a solution of $P$; see  \eqref{eq:wWrsKHz}.  We have shown that \alphref{lemma:vZsva} implies \alphref{lemma:vZsvb}.

To show the converse implication, assume that  \alphref{lemma:vZsvb} holds, and let $\vec a$ be a solution of $P$. Let $M$ be an $R$-module, let $B_1,\dots, B_n\in\Sub M$, and let $v\in p(B_1,\dots,B_n)$. It is convenient to let $u=0_M$; then we obtain an $S\in\SSC(p,M)$ satisfying \eqref{eq:mKgrMkpl} for all $e_i\in V(G_p)$. Note in advance that when we reference Section \ref{sect:thm}, $\alg A:=(R,+)$, $G:=G_p$, and $H:=G_q$. 
For each $d\in V(G_q)$, 
\begin{equation}
\text{pick a directed path }\vvec e(d)=(e'_{j_1},\dots,e'_{j_k})\text{ from }\source{G_q}\text{ to }d;
\label{eq:msTkvgrnVBrm}
\end{equation}
here $k$ depends on the choice of this path (and on $d$). With reference to \eqref{eq:hpKpkRsmnRh}, let
\begin{equation}
T(d):=\sum_{w\in V(G_p)} 
\esetcnt {G_p}{\vec a}{\set{e'_{j_1},\dots,e'_{j_k}}}(w)\cdot S(w).
\label{eq:nLsKhgpZKs}
\end{equation}
We know from  Section \ref{sect:thm} that 
\eqref{eq:dpsTlsmHd} implies \eqref{eq:wszBmnT}. Hence, the coefficient of  $S(w)$ in \eqref{eq:nLsKhgpZKs} does not depend on the choice of $\vvec e(d)$. Thus, $T(d)$ is well defined, that is
\begin{equation}
T(d)\text{ does not depend on the choice of }\vvec e(d)\text{ in \eqref{eq:msTkvgrnVBrm}}.
\label{eq:rTghpQlBl}
\end{equation}
As $S(w)$ in \eqref{eq:nLsKhgpZKs} belongs to $M$ and its coefficient to $R$, $T(d)\in M$. So, $T\in \SSC(q,M)$. 
As the empty sum in $M$ is $0_M=u$, we have that $T(\source{G_q})=u$.  Since $\vec a$ is a solution of $P$ and $\vvec e(\sink{G_q})$ is a maximal directed path in $G_q$, it follows from  \eqref{eq:nLsKhgpZKs},  \eqref{eq:wWrsKHz}, \eqref{eq:btranspcnt}, 
and \eqref{eq:mKgrMkpl} that
\begin{equation*}
T(\sink{G_q})= 1_R\cdot S(\sink{G_p}) - 1_R\cdot S(\source{G_p})=v-u. 
\end{equation*}

To see the third part of \eqref{eq:mKgrMkpl}  with $q$ and $T$ instead of $p$ and $S$, let $e'_i\in E(G_q)$. According to \eqref{eq:msTkvgrnVBrm} but with $k-1$ instead of $k$, let  $\vvec e(\tail{e'_i})$ be the chosen directed path for $\tail{e'_i}\in V(G_q)$.
By \eqref{eq:rTghpQlBl}, we can assume that  $\vvec e(\head{e'_i})$ is obtained from $\vvec e(\tail{e'_i})$ by adding $e'_{j_k}:=e'_i$ to its end. So  $j_k=i$, $e'_{j_k}=e'_i$, 
\begin{equation*}
\vvec e(\tail{e'_i})=(e'_{j_1}, \dots, e'_{j_{k-1}}),\text{ and }
 \vvec e(\head{e'_i})=(e'_{j_1}, \dots,e'_{j_{k-1}}, e'_{j_{k}}). 
\end{equation*}
Hence, applying \eqref{eq:hpKpkRsmnRh}
to the coefficient of each of the $S(w)$ in  \eqref{eq:nLsKhgpZKs}, 
\begin{equation}
T(\head{e'_i})-T(\tail{e'_i})=
\sum_{w\in V(G_p)} 
\edgecnt {G_p}{\vec a}{e'_{j_k}}(w)\cdot S(w).
\label{eq:szrSgCskGk}
\end{equation}
As $j_k=i$ and most of the summands above are zero by \eqref{eq:lRkSprdNnt},  \eqref{eq:szrSgCskGk} turns into 
\begin{align*}
T(\head{e'_i})-T(\tail{e'_i})&=-a_i\cdot S(\tail{e_i}) + a_i\cdot S(\head{e_i})\cr 
&=a_i\cdot\bigl( S(\head{e_i})- S(\tail{e_i}) \bigr),
\end{align*}
which belongs to $B_i$ since $S$ satisfies \eqref{eq:mKgrMkpl}. Thus, Lemma \ref{lemma:grflm} yields that $v=v-u\in q(B_1,\dots,B_n)$. Therefore, $p(B_1,\dots,B_n)\leq q(B_1,\dots,B_n)$, that is, \alphref{lemma:vZsva} holds, completing the proof of Lemma \ref{lemma:vZsv}. 
\end{proof}

Next, we resume the proof of Theorem \ref{thm:hutch}. As noted in  \eqref{eq:whslmdbh}, $\lambda: p\leq q$ is 1-balanced. Clearly, so is $\dual \lambda: \dual q\leq \dual p$. 
Letting  $\mathcal L_R:=\{\Sub M: M$ is an $R$-module$\}$ and $P:=\problem(G_p,G_q$, $\xi_p,\xi_q$, $(R,+), 1_R)$,  
Lemma \ref{lemma:vZsv} gives that 
\begin{equation}
\lambda\text{ holds in }\mathcal L_r \iff P \text{ has a solution}.
\label{eq:bRlhwhdsRt}
\end{equation}
Tailoring Definition \ref{def:dualproblem} to the present situation, define $\dual{\xi_p} \colon\set{1,\dots,n}\to E(G_{\dual p})$ and 
 $\dual{\xi_q} \colon\{1$, \dots, $n\}\to E(G_{\dual q})$ in the natural way  by $\dual{\xi_p}(i):=\dual{\xi_p(i)}=\dual{e_i}$ and $\dual{\xi_q}(i):=\dual{\xi_q(i)}=\vdual{e'_i}$. With $P':=\problem(G_{\dual q}, G_{\dual p}$, $\dual{\xi_q},\dual {\xi_p}$, $(R,+), 1_R)$, Lemma \ref{lemma:vZsv} yields that 
\begin{equation}
\dual \lambda\text{ holds in }\mathcal L_r \iff P' \text{ has a solution}.
\label{eq:bsJssLzX}
\end{equation}
Let $\dual P$ denote the dual of $P$; see Definition \ref{def:dualproblem}. 
It follows from  \eqref{eq:fcgdkfDla}--\eqref{eq:fcgdkfDlb} and Definitions \ref{def:dualgraph} and \ref{def:dualproblem} that 
$P'$ is the same as $\dual P$. Hence, \eqref{eq:bsJssLzX} turns into
\begin{equation}
\dual \lambda\text{ holds in }\mathcal L_r \iff \dual P \text{ has a solution}.
\label{eq:bkMpSsKNgbd}
\end{equation}
Finally,  Theorem \ref{thm:main}, \eqref{eq:bRlhwhdsRt}, and \eqref{eq:bkMpSsKNgbd} imply that $\lambda$ holds in $\mathcal L_r$ if and only if so does $\dual \lambda$,  completing the proof of Theorem \ref{thm:hutch}.
\end{proof}

\end{document}